\numberwithin{equation}{section}
\newtheorem{theorem}{Theorem}[section]
\newtheorem{proposition}[theorem]{Proposition}
\newtheorem{lemma}[theorem]{Lemma}
\theoremstyle{definition}
\newtheorem{definition}[theorem]{Definition}
\newtheorem{example}[theorem]{Example}
\newtheorem{remark}[theorem]{Remark}
\newtheorem{result}[theorem]{Result}
\newcommand\abs[1]{\left\lvert#1\right\rvert}
\numberwithin{equation}{section}
\def \dis {\displaystyle}
\def \R {\mathbb{R}}
\def \dx{\mathrm{d}x}
\def \ds{\mathrm{d}s}
\def \dq {\mathrm{d}x \, \mathrm{d}t}
\def \H {\mathbb{H}}
\def \V {\mathbb{V}}
\def \L {\mathbb{L}}
\def \H {\mathbb{H}}
\def \U {\mathcal{U}_{ad}}
\def \W {\mathbb{W}}
\keywords{Optimal control problems, Fokker-Planck equation, Metric graphs, shifted Legendre wavelets, Collocation method}
\begin{document}
	\title[Fokker--Planck Dynamics on Star Graphs]
{Fokker--Planck Dynamics on Star Graphs with Variable Drift:
Well-Posedness, Adjoint Analysis, and Numerical Approximation}

\author{Ritu Kumari}
\address{Department of Mathematics, Indian Institute of Technology Delhi,
New Delhi 110016, India}
\email{ritsrishu98@gmail.com}

\author{Cyrille Kenne}
\address{Department of Mathematics, University of British Columbia,
Vancouver, BC V6T 1Z2, Canada}
\email{kenne@math.ubc.ca}

\author{Landry Djomegne}
\address{Laboratoire LAMIA, D\'epartement de Math\'ematiques et Informatique,
Universit\'e des Antilles, Campus Fouillole,
97159 Pointe-\`a-Pitre, Guadeloupe}
\email{landry.djomegne@gmail.com}

\author{Mani Mehra}
\address{Department of Mathematics, Indian Institute of Technology Delhi,
New Delhi 110016, India}
\email{mmehra@maths.iitd.ac.in}

	\date{\today}

	
\begin{abstract}
	Stochastic transport processes on networked domains (modelled on metric graphs)  arise in a variety of applications where diffusion and drift mechanisms interact with an underlying graph structure. The Fokker--Planck equation provides a natural framework for describing the evolution of probability densities associated with such dynamics. While Fokker--Planck equations on metric graphs have been studied from an analytical viewpoint, their optimal control remains largely unexplored, particularly in settings where the control acts through the drift term.
In this paper, we investigate an optimal control problem governed by the Fokker--Planck equation on a star graph, with a bilinear control appearing in the drift. We establish the well-posedness of the state equation and prove the existence of at least one optimal control. The associated adjoint system is derived, and first-order necessary optimality conditions are formulated. A wavelet-based numerical scheme is proposed to approximate the optimal solution, and its performance is illustrated through representative numerical experiments. These results contribute to the analytical and computational understanding of controlled stochastic dynamics on network-like domains.
\end{abstract}
	\maketitle	
\section{Introduction}

The Fokker-Planck equation is a partial differential equation that describes the time evolution of the probability density function of a particle's velocity, particularly when the particle is subjected to both drag and stochastic (random) forces, as observed in Brownian motion. This equation can be extended to describe the evolution of other observables as well \cite{kadanoff2000statistical}. Owing to its generality, the Fokker-Planck equation finds broad applications in diverse fields such as fluid dynamics \cite{moroni2006}, physics \cite{escande2007, escande2008,  malkov2017}, socio-physics \cite{cordier2005, jagielski2013, yakovenko2007} and even in neural networks \cite{huang2005, kamitani2004, suykens1998}.

 Many physical systems are traditionally modeled using differential equations in Euclidean domains. However, several real-world processes, such as gas transport in pipeline networks \cite{steinbach2007pde}, water wave propagation in open channel systems described by Burgers' equation \cite{yoshioka2014burgers}, and fluid motion through canal networks governed by the Saint-Venant equations \cite{xu2010differential}, are more naturally represented by partial differential equations (PDE) defined on metric graphs. 
  In particular, the Fokker-Planck equation on graphs has drawn growing attention for its effectiveness in modeling stochastic processes across network-like structures (see e.g., \cite{chow2018entropy, chow2012fokker} and the references therein). For example, in neuroscience, it provides a powerful framework to describe how neurotransmitters diffuse or how membrane potentials distribute along dendritic trees, where each dendrite is represented as an edge in the neural network. In the context of chemical and biological systems, it models molecular transport through branched capillary or vascular networks. Furthermore, in financial mathematics, metric graphs may represent networks of interlinked markets or financial instruments, allowing the Fokker-Planck framework to describe the evolution of risk distributions over such structures. The explicit solution of the Fokker-Planck equation on metric graphs has been presented in \cite{MATRASULOV2022128279}, emphasizing its growing relevance.

  While the Fokker--Planck equation on metric graphs provides a natural framework for describing the evolution of probability densities over networked domains, many applications require more than a passive description of the underlying stochastic dynamics. In several practical situations, it is desirable to actively influence the evolution of the probability distribution by acting on external parameters of the system. A particularly relevant mechanism consists in modifying the drift term, which allows one to steer the stochastic process along the network in a controlled manner. This leads naturally to the formulation of optimal control problems in which the Fokker--Planck equation serves as the state constraint and the control enters in a bilinear fashion through the drift. From both modeling and analytical viewpoints, such problems provide a systematic framework to balance control costs against prescribed objectives, while accounting for the interaction between stochastic dynamics and the geometry of the underlying graph. These considerations motivate the study of optimal control problems governed by Fokker--Planck equations on metric graphs.

\subsection{Literature review}

Optimal control problems (OCPs) involve optimizing a prescribed cost functional on a set of admissible control strategies, subject to dynamic constraints typically expressed as differential equations \cite{kirk2004optimal}. A classical example appears in aerospace engineering, where OCPs are used to compute an optimal thrust trajectory that minimizes fuel consumption while achieving the insertion in a desired orbit \cite{morante2021survey,olympio2011optimal, trelat2012optimal}. The optimal control of partial differential equations (PDEs) on graphs has emerged as an active area of research, as it provides insight not only into the evolution of probabilistic systems over networked structures, but also guides these systems toward desired states. In \cite{mehandiratta2021optimal}, the authors analyzed optimal control problems governed by time-fractional diffusion equations on metric graphs, established the well-posedness of the model, and implemented a finite difference method for numerical approximation. Parabolic fractional initial-boundary value problems of Sturm-Liouville type on both intervals and general star graphs were addressed in \cite{leugering2022optimalcontrolproblemsparabolic}, where results on the existence, uniqueness and regularity of weak and very weak solutions were obtained. Similarly, the existence and regularity of solutions for the fractional wave equation of Sturm-Liouville type on star graphs with mixed Dirichlet and Neumann boundary conditions were derived in \cite{Moutamal02122023}. Additional studies on optimal control problems on star graphs can be found in \cite{dorville2011optimal,graef2014existence, mehandiratta2019existence, mehandiratta2021existence,  stoll2019}. In \cite{mohammadi2015hermite}, a Hermite spectral discretization scheme was proposed to approximate the solution of a Fokker-Planck optimal control problem in an unbounded interval. We mention that the optimal control of the Fokker-Planck equation has also been motivated by mean field game theory
in the recent years. For more details, we refer the reader to \cite{achdou2021, achdou2020, camilli2016, lasry2007}.

In this paper, an optimal bilinear control problem is considered on a metric graph, where the control appears in the drift term. The well-posedness of the system is established, and the existence of an optimal control is demonstrated. In addition, the adjoint system and the associated first-order optimality conditions are derived, and the theoretical findings are later supported through numerical simulations. For optimal control problems related to the Fokker-Planck equation in the Euclidean domain, we refer the reader to \cite{ annunziato2013, aronna2021,fleig2017, kenne2026,roy2016}.

 To numerically validate the theoretical findings, a wavelet-based method is utilized, which is well-suited for solving partial differential equations due to their inherent advantages such as multiscale decomposition, localization properties, and orthogonality. A related approach was presented in \cite{shukla2020fast}, where Burgers' equation on an open channel network was addressed using a fast adaptive spectral graph method for efficient numerical simulation.
 In \cite{doi:10.1177/10775463231169317}, a generalized fractional-order Legendre wavelet method was employed to solve a two-dimensional distributed-order fractional optimal control problem, demonstrating the efficiency of the approach. Similarly, the time-fractional diffusion equation was first solved using a finite difference scheme in \cite{MEHANDIRATTA2020152}, and later revisited in \cite{faheem2023collocation} using a Haar wavelet collocation method, which outperformed the former approach. Motivated by the advantages of wavelet-based techniques, a wavelet collocation method for solving the optimal control problem is proposed in this paper. To the best of our knowledge, this is the first time, when a wavelet collocation method has been applied to optimal control problems on graphs.

\subsection{Problem Formulation}
To develop the mathematical framework on networked domains, let $\mathcal{G} = (V, E)$ denote a finite graph, where $V = \{v_i\}_{i=0}^N$ is the set of vertices (nodes) and $E$ is the set of edges connecting them. The graph $\mathcal{G}$ is referred to as a metric graph if each edge is equipped with a corresponding metric structure, thereby allowing differential operators to be defined along the edges. The main purpose of this paper is to study the optimal control of the Fokker--Planck equations on a metric star graph $\mathcal{G}$ with $N$ edges (see Figure \ref{stargraph}). The controls appear in the drift term of the Fokker--Planck equation. To be more precise, we consider the following control problem.
             

\begin{equation}
 \label{opt}
 \begin{array}{lll}
  \dis   \inf_{u\in \U}J(\rho,u)&:=& \dis \frac{1}{2}\sum_{i=1}^{N}\int_{Q_i} \abs{\rho_i-\rho^d_i}^2\,\dq+\frac{1}{2}\sum_{i=1}^{N}\int_{0}^{l_i} \abs{\rho_i(T)-\rho^T_i}^2\,\dx\\
&&
\dis +\frac{1}{2}\sum_{i=1}^{N}\alpha_i\int_{Q_i}
|u_i|^{2} \,\dq, 
 \end{array}
\end{equation}
\normalsize
where for $i=1,\ldots, N$, $\rho_i$ is the state associated with the control $u_i$, solution of
\begin{equation}\label{model}
\left\{
\begin{array}{lllllllllllllllll}
\displaystyle	\frac{\partial \rho_i}{\partial t}-D_i\frac{\partial^2 \rho_i}{\partial x^2}-\frac{\partial }{\partial x}\left(u_i\rho_i\right)&=&0&\text{in}& Q_i,\\[6pt]
\displaystyle	\rho_i(x,0)&=&\rho^0_i(x),& &x\in(0,l_i),\\[3pt]
\displaystyle	\sum_{i=1}^N\left[D_i\frac{\partial \rho_i}{\partial x}(0,t)+u_i(0,t)\rho_i(0,t)\right]&=&0,& & \\[3pt]
\displaystyle \rho_i(0,t)&=&\rho_j(0,t), & i\neq j,& t\in (0,T),\\ [3pt]
\displaystyle	\rho_i(l_i,t)&=&0,& & 
\end{array}
\right.
\end{equation}
with $Q_i:=(0,l_i)\times (0,T)$.  Here, $\rho^d=(\rho_i^d)_i \in L^2(0,T;\prod_{i= 1}^N L^2(0,l_i))$, $\rho^T=(\rho_i^T)_i \in \prod_{i= 1}^N L^2(0,l_i)$, $\alpha_i>0$, $D_i>0$,  $i=1,...,N$ and the set of admissible controls is defined as
\begin{equation}\label{defuad}
\U:=\left\{u=(u_i)_i\in \prod_{i= 1}^N L^\infty(Q_i): u^{\min}\leq u(x,t)\leq u^{\max}\;\; \text{a.e.}\; \text{in}\; \prod_{i= 1}^N Q_i\right\},
\end{equation}
where $u^{min}=(u_i^{\min})_i$, $u^{\max}=(u_i^{\max})_i\in \prod_{i= 1}^N L^\infty(Q_i)$,  are arbitrary but fixed. The initial condition $ \rho^0=(\rho^0_i)_i\in \prod_{i= 1}^N L^2(0,l_i)$.
This optimal control problem \eqref{opt} governed by the Fokker-Planck equation \eqref{model}-\eqref{defuad} will be called an FPOCP throughout the paper.
\begin{figure}[!h] 
            \centering
            \includegraphics[scale=0.24]{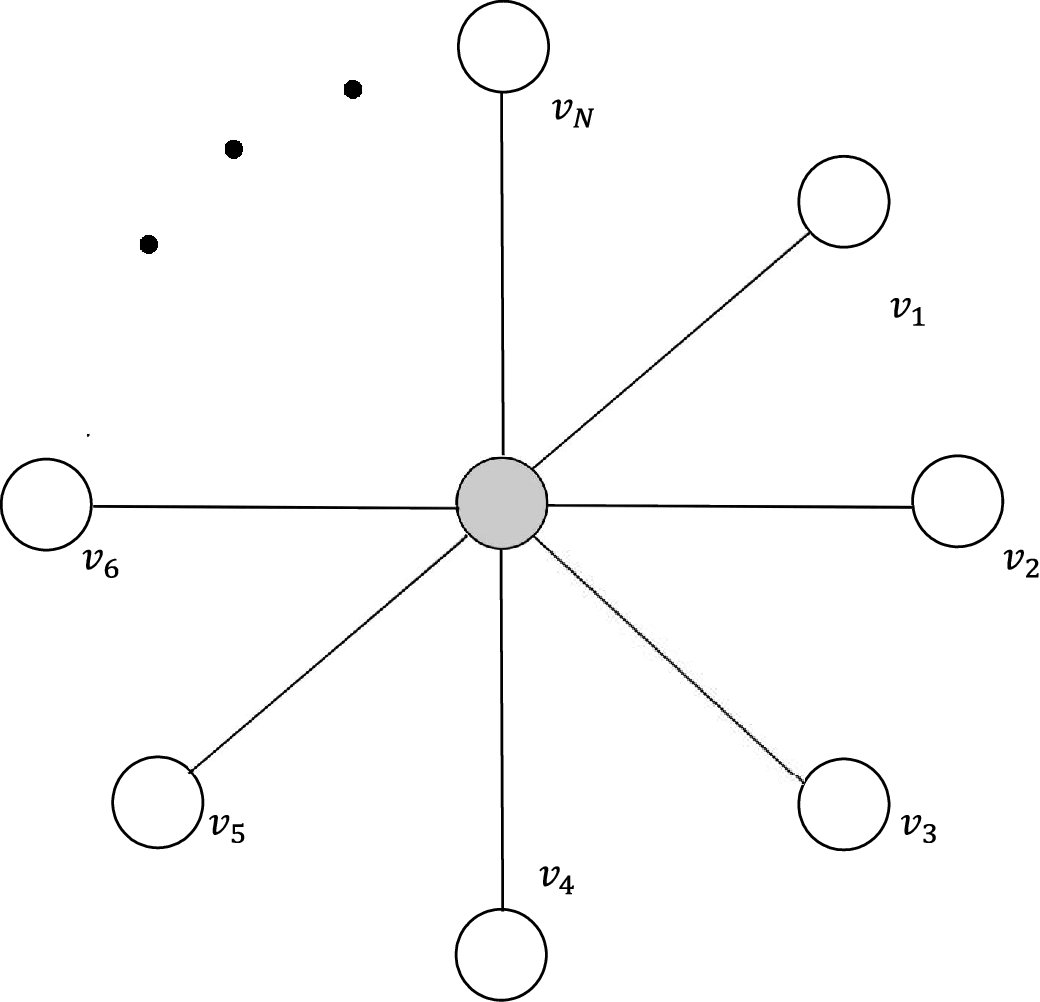} 
            \caption{A star graph having $N$ edges.}
            \label{stargraph}
    \end{figure}

 \subsection{Contribution}
 There are two primary approaches to solve an optimal control problem. The first is the direct method, in which the problem is directly transformed into an optimization problem and subsequently solved using a nonlinear programming (NLP) solver. The second is the indirect method, where the necessary optimality conditions are derived using the calculus of variations. In this work, the indirect method is employed, where the variations of the augmented functional are computed with respect to the state, costate, and control variables. The main contributions of this work are summarized as follows:
\begin{itemize}
    \item The existence and uniqueness of the weak solution to the Fokker-Planck equation is established, and the existence of an optimal solution to the proposed optimal control problem (FPOCP) on a star graph is proved.
    
    \item The corresponding adjoint system and first-order necessary optimality conditions are derived, thereby reformulating the FPOCP into a system of nonlinear algebraic equations.
    
    \item A numerical method based on shifted Legendre wavelets \cite{kumari2025muntz} is developed to approximate the optimal solution of the FPOCP. Numerical experiments are provided to validate the theoretical results and to demonstrate the efficiency of the proposed method.
\end{itemize}
The structure of the paper is as follows: Section~\ref{prelim} presents preliminary concepts and introduces the fundamental notations used throughout the paper. In Section~\ref{existence}, the existence and uniqueness of a weak solution to the nonlinear, non-homogeneous Fokker-Planck equation on graphs are established. Section~\ref{exopt} proves the existence of an optimal solution to the FPOCP by introducing a control-to-state operator and deriving the corresponding adjoint equation. Additionally, the first-order necessary optimality condition is stated in this section. In Section~\ref{numerical illus}, the shifted Legendre wavelets are briefly introduced, and based on these wavelets a numerical method  is developed to solve the optimality system associated with the FPOCP. Section~\ref{numerical res} presents two numerical examples to demonstrate the effectiveness of the proposed method. Finally, Section~\ref{conc} concludes the paper.

\section{Preliminary results\label{prelim}}
\subsection*{Notations}
We set $\mathbb{L}^p := \prod_{i= 1}^NL^p(0,l_i)$, $\mathbb{L}^\infty := \prod_{i= 1}^NL^\infty(Q_i)$ \\
and $\H^1:=\prod_{i= 1}^NH^1(0,l_i)$. We endow these spaces by the following norms $\|u\|_{\L^p} := \left(\sum_{i=1}^{N}\|u_i\|^p_{L^p(0,l_i)}\right)^{\frac{1}{p}}$,  $\|u\|_{\L^\infty} := \sum_{i=1}^{N}\|u_i\|_{L^\infty(Q_i)}$, and $\|u\|^2_{\H^1} := \sum_{i=1}^{N}\|u_i\|^2_{H^1(0,l_i)}$, respectively.  Let $(\H^1)^\star$ be the topological dual space of $\H^1$ and we denote by $\left\langle \cdot, \cdot \right\rangle$ the dual product between $\H^1$ and $(\H^1)^\star$. We will also denote by $\left(\cdot, \cdot \right)_{\H^1}$ and $\left(\cdot, \cdot \right)$ the usual scalar products on $\H^1$ and $\L^2$, respectively. We also set $\dis \bar{D}=\max_{i=1,...,N}D_i$ and $\dis \underbar{D}=\min_{i=1,...,N}D_i$.

Let us introduce the space 
$$
\mathbb{V}:=\left\{ \varphi\in \mathbb{H}^1: \varphi_i(0)=\varphi_j(0), \; i\neq j, \varphi_i(l_i)=0,\; i,j=1,...,N  \right\}.
$$
Then $\mathbb{V}$ is a closed subspace of the Hilbert space $\mathbb{H}^1$. Therefore, $\mathbb{V}$ endowed with the norm of $\mathbb{H}^1$ is also a Hilbert space. We denote by $\left\langle \cdot, \cdot \right\rangle_{\mathbb{V}, \mathbb{V}^\star}$ the dual product between $\mathbb{V}$ and $\mathbb{V}^\star$ and by identifying $\L^2$ with its dual $(\L^2)^\star$, we have $\mathbb{V}\hookrightarrow \L^2\hookrightarrow \mathbb{V}^\star$, and $
	(\rho, \varphi)=\left\langle \rho, \varphi \right\rangle_{\mathbb{V}, \mathbb{V}^\star},\;\; \rho \in \L^2, \; \varphi\in \mathbb{V}.
$
If we set
$
\W(0,T)= \left\{\rho \in L^2(0,T;\V); \rho_t\in L^2\left(0,T;\V^\star\right)\right\},
$
then $\W(0,T)$ endowed with the norm
$
\|\rho\|^2_{	\W(0,T)}=\|\rho\|^2_{L^2(0,T;\V)}+\|\rho_t\|^2_{L^2\left(0,T;\V^\star\right)}
$
is a Hilbert space. Moreover, we have that the embedding 
$
\W(0,T)\hookrightarrow  \mathcal{C}([0,T];\L^2),
$ is continuous
and the embedding
$
\W(0,T)\subset \subset L^2(0,T;\L^2)
$
is compact.

\section{Existence results}\label{existence}
In this section, we prove the existence and uniqueness of the following nonlinear non-homogeneous Fokker-Planck equation on a graph. Let $\rho^0=(\rho^0_i)_i\in \L^2$, $u=(u_i)_i\in \L^\infty$ and $f=(f_i)_i\in L^2(0,T;\V^\star)$. We consider the following problem:
\begin{equation}\label{general}
\left\{
\begin{array}{lllllllllllllllll}
\displaystyle	\frac{\partial \rho_i}{\partial t}-D_i\frac{\partial^2 \rho_i}{\partial x^2}-\frac{\partial }{\partial x}\left(u_i\rho_i\right)&=&f_i&\text{in}& Q_i,\\[6pt]
\displaystyle	\rho_i(x,0)&=&\rho^0_i(x),& &x\in(0,l_i),\\[3pt]
\displaystyle	\sum_{i=1}^N\left[D_i\frac{\partial \rho_i}{\partial x}(0,t)+u_i(0,t)\rho_i(0,t)\right]&=&0,& & \\[3pt]
\displaystyle \rho_i(0,t)&=&\rho_j(0,t), & i\neq j,& t\in (0,T),\\ [3pt]
\displaystyle	\rho_i(l_i,t)&=&0,& &
\end{array}
\right.
\end{equation}
with $i,j=1,\ldots,N$. We define the weak solution to the system \eqref{general} as follows.
\begin{definition}\label{weakgen}
	Let $f\in L^2(0,T;\V^\star)$, $u\in \L^\infty$  and $\rho^{0}\in \L^2$. We  say that a function
	$\rho=(\rho_i)_i\in \W(0,T)$ is a weak solution to \eqref{general}, if:
	\begin{equation}\label{weakgene}
	\begin{array}{lll}
	\dis \sum_{i=1}^{N}\frac{d}{dt}\left(\rho_i(\cdot),\varphi_i\right) + \mathcal{F}(\rho(\cdot),\varphi) = \sum_{i=1}^{N}\left\langle f_i(\cdot),\varphi_i\right\rangle \;\text{in }\; \mathcal{D}'(0,T), \,\forall \varphi=(\varphi)_i\in \V,\\
	\dis  \rho(\cdot,0)=\rho^0 \text{ in } \L^2,
	
	\end{array}
	\end{equation}
	where, $\mathcal{F}:=\mathcal{F}[u]: \V\times \V\to \R$ is defined by
	\begin{equation}\label{defFt}
	\dis	\mathcal{F}(\rho,\varphi):= \sum_{i=1}^{N}\int_{0}^{l_i} \left(D_i\frac{\partial \rho_i}{\partial x}+u_i\rho_i\right)\frac{\partial \varphi_i}{\partial x} \, \dx.
	\end{equation}
	
\end{definition}
In the rest of the paper, we denote by $C$ a positive constant that may vary line to line. Throughout the rest of the paper, $i,j=1,\ldots,N$, unless stated otherwise. We have the following first main result of this section.

\begin{theorem}\label{theoremexistence}
		Let $f\in L^2(0,T;\V^\star)$, $u\in \L^\infty$  and $\rho^{0}\in \L^2$.  Then, there exists a unique weak solution $\rho=(\rho_i)_i\in \W(0,T)$ to \eqref{general} in the sense of Definition~\ref{weakgen}.
	In addition, there exists a constant $C>0$ depending on $T$, $\underbar{D}$, $\bar{D}$, and continuously on $\|u\|_{\L^\infty}$ such that
	\begin{align}\label{estimationgen}
	\|\rho\|_{\mathcal{C}([0,T];L^2(\Omega))}+ \|\rho\|_{L^2(0,T;\V)} &\leq C\left(\|\rho^0\|_{\L^2}+\|f\|_{L^2(0,T;\V^\star)}\right),\\
\label{estimationge}
	\|\rho\|_{\W(0,T)}&\leq  C\left(\|\rho^0\|_{\L^2}+\|f\|_{L^2(0,T;\V^\star)}\right).
	\end{align}
\end{theorem}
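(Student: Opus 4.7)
The plan is to establish existence by a Faedo--Galerkin scheme on the Hilbert space $\mathbb{V}$, extract uniform energy estimates, and pass to the limit. Uniqueness then follows from a standard energy estimate on the difference of two solutions, and the bound on $\rho_t$ in $L^2(0,T;\mathbb{V}^\star)$ is read off directly from the weak formulation \eqref{weakgene}.

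First I would verify that the bilinear form $\mathcal{F}[u]$ is continuous on $\mathbb{V}\times\mathbb{V}$ and satisfies a G\aa{}rding inequality. Continuity is immediate from Cauchy--Schwarz and $u\in\mathbb{L}^\infty$. For G\aa{}rding, testing with $\varphi=\rho\in\mathbb{V}$ gives
\begin{equation*}
\mathcal{F}(\rho,\rho) = \sum_{i=1}^N\int_0^{l_i}\!\Bigl(D_i\bigl|\partial_x\rho_i\bigr|^2 + u_i\rho_i\,\partial_x\rho_i\Bigr)\,\mathrm{d}x,
\end{equation*}
and applying Young's inequality to the drift term with weight $D_i/2$ yields
\begin{equation*}
\mathcal{F}(\rho,\rho) \;\geq\; \tfrac{1}{2}\underbar{D}\,\|\rho\|_{\mathbb{V}}^2 - \lambda_0\,\|\rho\|_{\mathbb{L}^2}^2,
\qquad \lambda_0:=\frac{\|u\|_{\mathbb{L}^\infty}^2}{2\underbar{D}},
\end{equation*}
where I also use the Poincar\'e inequality on each edge (valid since $\rho_i(l_i)=0$ is built into $\mathbb{V}$) to bound $\|\rho\|_{\mathbb{L}^2}$ by $\|\rho\|_{\mathbb{V}}$ and conclude that $\mathcal{F}+\lambda_0 I$ is coercive on $\mathbb{V}$. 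Crucially, no boundary terms from $x=0$ appear here because the test function satisfies $\varphi_i(0)=\varphi_j(0)$ and the Kirchhoff condition in \eqref{general} turns the sum of fluxes into zero upon integration by parts.

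Next I would choose a countable Hilbert basis $\{w^k\}_{k\geq 1}$ of $\mathbb{V}$, orthonormal in $\mathbb{L}^2$, and seek Galerkin approximations $\rho^m(t)=\sum_{k=1}^m g_{m,k}(t)\,w^k$ solving the finite-dimensional system
\begin{equation*}
\sum_{i=1}^N\bigl(\partial_t\rho^m_i,w^k_i\bigr) + \mathcal{F}(\rho^m,w^k) \;=\; \sum_{i=1}^N\bigl\langle f_i,w^k_i\bigr\rangle, \qquad \rho^m(0)=P_m\rho^0,
\end{equation*}
for $k=1,\dots,m$. This is a linear ODE system with $L^2$-in-time coefficients, so Carath\'eodory theory gives a unique global solution. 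Testing against $\rho^m$ itself, using the G\aa{}rding estimate and Gronwall's lemma, produces a bound independent of $m$ for $\rho^m$ in $L^\infty(0,T;\mathbb{L}^2)\cap L^2(0,T;\mathbb{V})$ of the form asserted in \eqref{estimationgen}. Then using the equation to express $\partial_t\rho^m$ and duality bounds for $\mathcal{F}$ gives a uniform bound in $L^2(0,T;\mathbb{V}^\star)$, hence in $\mathbb{W}(0,T)$.

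Passing to the limit along a subsequence (weak in $L^2(0,T;\mathbb{V})$ for $\rho^m$, weak in $L^2(0,T;\mathbb{V}^\star)$ for $\partial_t\rho^m$, plus strong convergence in $L^2(0,T;\mathbb{L}^2)$ from the compact embedding $\mathbb{W}(0,T)\subset\subset L^2(0,T;\mathbb{L}^2)$ recalled in Section~\ref{prelim}) yields a weak solution $\rho$ in the sense of Definition~\ref{weakgen}; the initial condition is recovered using the continuous embedding $\mathbb{W}(0,T)\hookrightarrow\mathcal{C}([0,T];\mathbb{L}^2)$. For uniqueness, if $\rho,\tilde\rho$ are two weak solutions with the same data then $z=\rho-\tilde\rho$ solves \eqref{weakgene} with $f=0$ and $z(0)=0$; testing with $z$ and applying G\aa{}rding $+$ Gronwall gives $z\equiv 0$. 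Finally, \eqref{estimationge} follows by bounding $\|\partial_t\rho\|_{L^2(0,T;\mathbb{V}^\star)}$ through the equation, using continuity of $\mathcal{F}$ and \eqref{estimationgen}.

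The main obstacle is the lack of coercivity of $\mathcal{F}$ by itself due to the bilinear drift term $-\partial_x(u_i\rho_i)$; this is handled precisely by the Young-inequality absorption above, which produces the $\lambda_0$-shift and forces the constant $C$ in \eqref{estimationgen}--\eqref{estimationge} to depend continuously on $\|u\|_{\mathbb{L}^\infty}$ through $\exp(CT\|u\|_{\mathbb{L}^\infty}^2)$ via Gronwall. A minor but essential bookkeeping point is to ensure that the Kirchhoff/continuity conditions at the central node are correctly encoded in $\mathbb{V}$ so that no spurious boundary terms survive the integration by parts and the weak formulation faithfully represents the strong problem.
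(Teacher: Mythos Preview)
Your proposal is correct and follows a standard, self-contained route. The paper proceeds somewhat differently: rather than running a Faedo--Galerkin scheme explicitly, it verifies continuity \eqref{contGt} and weak coercivity \eqref{coercGt} of $\mathcal{F}[u]$ and then invokes Lions' abstract existence theorem (from \cite{lions1968} or \cite[Theorem~11.7]{chipot2000}) to obtain existence and uniqueness in one stroke. For the a~priori bounds, instead of applying G{\aa}rding plus Gronwall directly, the paper performs the exponential change of variable $\eta_i=e^{-rt}\rho_i$ with $r$ chosen so that the shifted bilinear form becomes genuinely coercive; this yields the estimate \eqref{estimationgen} without an exponential factor in $t$ appearing explicitly in the intermediate steps (the dependence on $\|u\|_{\mathbb{L}^\infty}$ is absorbed into the choice of $r$). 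Uniqueness is then re-derived separately by the same energy/Gronwall argument you use. Your Galerkin approach is more constructive and avoids citing a black-box theorem, while the paper's approach is shorter and makes the dependence of the constants slightly more transparent; both are completely standard for linear parabolic problems of this type and lead to the same conclusion.
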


\begin{proof}We proceed in three steps.
	
	\noindent \textbf{Step 1.} 	For $\rho, \varphi \in \V$, a straightforward computation leads us to 
	\begin{equation}\label{contGt}
	\dis	|\mathcal{F}(\rho,\varphi)|\leq C \|\rho\|_{\V} \|\varphi\|_{\V},
	\end{equation}
	where $\dis C:=\bar{D}+ \|u\|_{\L^\infty}$. Moreover, we observe that for every $\rho ,\varphi \in \V$ and thanks to Young's inequality we have
	\begin{equation}\label{ineq0}
	\dis	\left| \sum_{i=1}^{N}\int_{0}^{l_i} u_i\rho_i\frac{\partial \varphi_i}{\partial x} \, \dx\right|\leq \|u\|_{\L^\infty}\left[\frac{\delta}{2}\|\rho\|^2_{\L^2}+\frac{1}{2\delta}\|\varphi\|^2_{\V}\right],
	\end{equation}
	for some $\delta>0$.
	By choosing $\delta>\frac{1}{2 \underbar{D}}\|u\|_{\L^\infty}$ and using the latter inequality, we obtain after a straightforward calculation that for every $\rho\in \V$,
	\begin{equation}\label{coercGt}
	\dis\mathcal{F}(\rho,\rho)\geq \gamma\|\rho\|^2_{\V}-\lambda \|\rho\|^2_{\L^2},
	\end{equation}
	with $\gamma:=\underbar{D}-\frac{1}{2\delta}\|u\|_{\L^\infty}>0$ and $\lambda=\underbar{D}+\frac{\delta}{2}\|u\|_{\L^\infty}>0.$
	Therefore, applying \cite[Theorem 4.1 and Remark 43]{lions1968} (see also \cite[Theorem 11.7]{chipot2000}) along with the continuity and the weak-coerciveness results given in \eqref{contGt} and \eqref{coercGt},  respectively, we deduce the existence of a unique weak solution $\rho\in \W(0,T)$ to \eqref{weakgene}.
	
	\noindent \textbf{Step 2.}
	We show the estimates \eqref{estimationgen}-\eqref{estimationge}:
We use the classical change of variables $\eta_i=e^{-rt}\rho_i$ where $\rho$ is solution to \eqref{general} and for  some $r>0$. Then, $\eta=(\eta_i)_i\in \W(0,T)$ is solution to
	\begin{equation}\label{generalr}
	\left\{
	\begin{array}{lllllllllllllllll}
	\displaystyle	\frac{\partial \eta_i}{\partial t}-D_i\frac{\partial^2 \eta_i}{\partial x^2}-\frac{\partial }{\partial x}\left(u_i\eta_i\right)+r\eta_i&=&e^{-rt}f_i&\text{in}& Q_i,\\[6pt]
	\displaystyle	\eta_i(x,0)&=&\rho^0_i(x),& &x\in(0,l_i),\\[3pt]
	\displaystyle	\sum_{i=1}^N\left[D_i\frac{\partial \eta_i}{\partial x}(0,t)+u_i(0,t)\eta_i(0,t)\right]&=&0,& & \\[3pt]
	\displaystyle \eta_i(0,t)&=&\eta_j(0,t), & i\neq j,& \\ [3pt]
	\displaystyle	\eta_i(l_i,t)&=&0.& &
	\end{array}
	\right.
	\end{equation}
	Next, we test the weak formulation of \eqref{generalr} with $\varphi=\eta$, integrating over $(0,t)$ for some $t\in (0,T)$ to obtain:
	\begin{equation}\label{eq0}
	\begin{array}{lll}
	\dis \frac{1}{2}\|\eta(t)\|^2_{\L^2} + \sum_{i=1}^{N}\int_{0}^{t}\int_{0}^{l_i}D_i\left|\frac{\partial \eta_i }{\partial x}(s)\right|^2\dx\ds+r\int_{0}^{t}\|\eta(s)\|^2_{\L^2}\ds\\
	 = \dis \frac{1}{2}\|\rho^0\|^2_{\L^2}+ \int_{0}^{t}e^{-rs}\sum_{i=1}^{n} \left\langle f_i(s),\eta_i(s)\right\rangle \ds -\sum_{i=1}^{N}\int_{0}^{t}\int_{0}^{l_i}u_i(s)\eta_i(s)\frac{\partial \eta_i}{\partial x}(s)\dx\ds.
	\end{array}
	\end{equation}
	Using Young's inequality, we can write
	\begin{equation*}
	\begin{array}{rlll}
	\dis \int_{0}^{t}e^{-rs}\sum_{i=1}^{N} \left\langle f_i(s),\eta_i(s)\right\rangle \ds  &\leq& \dis \frac{1}{\underline{D}}\|f\|^2_{L^2(0,T;\V^\star)} +\frac{\underline{D}}{4}\int_{0}^{t}\|\eta(s)\|^2_{\V}\ds,\\
	\dis	\left| \sum_{i=1}^{N}\int_{0}^{t}\int_{0}^{l_i} u_i\eta_i\frac{\partial \eta_i}{\partial x} \, \dx\ds\right|&\leq& \dis \frac{\underline{D}}{4}\int_{0}^{t}\left\|\frac{\partial \eta}{\partial x}(s)\right\|^2_{\L^2}ds+\frac{\|u\|^2_{\L^\infty}}{\underline{D}}\int_{0}^{t}\|\eta(s)\|^2_{\L^2}\ds.
	\end{array}
	\end{equation*}
	Therefore, using these latter estimations and choosing $r=\frac{3\underline{D}}{4}+\frac{\|u\|^2_{\L^\infty}}{\underline{D}}$ in \eqref{eq0}, we get
	\begin{equation*}
	\begin{array}{lll}
	\dis \frac{1}{2}\|\eta(t)\|^2_{\L^2} + \frac{\underline{D}}{2}\int_{0}^{t}\|\eta(s)\|^2_{\V}\, \ds\leq \dis \frac{1}{2}\|\rho_0\|^2_{\L^2}+ \frac{1}{\underline{D}}\|f\|^2_{L^2(0,T;\V^\star)}.
	\end{array}
	\end{equation*}	
\vspace{-1cm}
	Hence, 
	\begin{equation}\label{eq1}
	\begin{array}{lll}
	\dis \|\eta\|_{\mathcal{C}([0,T];\L^2)} \leq \sqrt{1+\frac{2}{\underbar{D}}} \left(\|\rho^0\|_{\L^2}+\|f\|_{L^2(0,T;\V^\star)}\right),\vspace{0.2cm}\\
	
	\|\eta\|_{L^2(0,T;\V)} \leq \sqrt{\frac{1}{\underbar{D}}+\frac{2}{\underbar{D}^2}}\left(\|\rho^0\|_{\L^2}+\|f\|_{L^2(0,T;\V^\star)}\right).
	\end{array}
	\end{equation}	
	Transforming back by $\eta=e^{-rt}\rho$ lead us to \eqref{estimationgen}. 
	By classical straightforward computations on the first equation of \eqref{general} while using \eqref{estimationgen}, we obtain \eqref{estimationge}.
	
	\noindent \textbf{Step 3.} We prove the uniqueness of $\rho$ solution to \eqref{general}.\\
	Assume that there exist $\rho_1$ and $\rho_2$ solutions to \eqref{general} with the same data.  Set $\eta:=\rho_1-\rho_2$.
	Then $\eta\in \W(0,T)$ is solution to
	\begin{equation}\label{uniqueness}
	\left\{
	\begin{array}{lllllllllllllllll}
	\displaystyle	\frac{\partial \eta_i}{\partial t}-D_i\frac{\partial^2 \eta_i}{\partial x^2}-\frac{\partial }{\partial x}\left(u_i\eta_i\right)&=&0&\text{in}& Q_i,\\[6pt]
	\displaystyle	\eta_i(x,0)&=&0,& &x\in(0,l_i),\\[3pt]
	\displaystyle	\sum_{i=1}^N\left[D_i\frac{\partial \eta_i}{\partial x}(0,t)+u_i(0,t)\eta_i(0,t)\right]&=&0,& & \\[3pt]
	\displaystyle \eta_i(0,t)&=&\eta_j(0,t), & i\neq j,& t\in (0,T),\\ [3pt]
	\displaystyle	\eta_i(l_i,t)&=&0.& &
	\end{array}
	\right.
	\end{equation}
	Next, we test the weak formulation of \eqref{uniqueness} with $\varphi=\eta(t)$, to obtain:
	\begin{equation}\label{eq0u}
	\begin{array}{lll}
	\dis \frac{1}{2} \frac{d}{dt}\|\eta(t)\|^2_{\L^2} + \underbar{D}\left\|\frac{\partial \eta}{\partial x}(t) \right\|^2_{\L^2} \leq - \dis\sum_{i=1}^{N}\int_{0}^{l_i} u_i(t)\eta_i(t)\frac{\partial \eta_i}{\partial x}(t) \, \dx.
	\end{array}
	\end{equation}
	Using Young's inequality, we obtain
	\begin{equation*}
	\begin{array}{rlll}
	- \dis\sum_{i=1}^{N}\int_{0}^{l_i} u_i(t)\eta_i(t)\frac{\partial \eta_i}{\partial x}(t) \, \dx \leq \dis \frac{\underbar{D}}{2}\left\|\frac{\partial \eta}{\partial x}(t) \right\|^2_{\L^2}+ \frac{1}{2\underbar{D}}\|u\|^2_{\L^\infty}\| \eta(t)\|^2_{\L^2}.
	\end{array}
	\end{equation*}
	Therefore, using this latter estimation in \eqref{eq0u}, we deduce that
	\begin{equation}\label{eq1u}
	\begin{array}{lll}
	\dis  \frac{d}{dt}\|\eta(t)\|^2_{\L^2} \leq \frac{1}{\underbar{D}} \|u\|^2_{\L^\infty}\| \eta(t)\|^2_{\L^2}. 
	\end{array}
	\end{equation}
	Using the Gronwall inequality and since $\eta(\cdot,0)=0$, we arrive to $\eta=0$ a.e. in $Q$. Hence $\rho_1=\rho_2$. This completes the proof.
\end{proof}

By taking $f=0$ in \eqref{general}, we deduce the following well-posedness result for \eqref{model}.
\begin{theorem}\label{existencemodel}
	Let $u\in \L^\infty$  and $\rho^{0}\in \L^2$.  Then, there exists a unique weak solution $\rho=(\rho_i)_i\in \W(0,T)$ to \eqref{model}.
	In addition, there exists a constant $C>0$ depending on $T$, $\underbar{D}$, $\bar{D}$ and continuously on $\|u\|_{\L^\infty}$ such that
	\begin{align}
	\label{estmodel}
	\|\rho\|_{\mathcal{C}([0,T];L^2(\Omega))}+ \|\rho\|_{L^2(0,T;\V)} &\leq C\|\rho^0\|_{\L^2},
	\\
	\label{estimmodelw}
	\|\rho\|_{\W(0,T)}&\leq  C\|\rho^0\|_{\L^2}.
	\end{align}
\end{theorem}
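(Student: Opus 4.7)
The plan is to obtain Theorem~\ref{existencemodel} as an immediate corollary of Theorem~\ref{theoremexistence} by specializing the source term to $f\equiv 0$. Observe that the system \eqref{model} coincides word-for-word with the general system \eqref{general} once one sets $f_i=0$ for each $i=1,\dots,N$. The zero function trivially lies in $L^2(0,T;\V^\star)$ with norm $0$, so the hypotheses of Theorem~\ref{theoremexistence} are satisfied under the present assumptions $u\in\L^\infty$ and $\rho^0\in\L^2$.

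My first step would therefore be to apply Theorem~\ref{theoremexistence} with $f=0$ to produce a unique weak solution $\rho=(\rho_i)_i\in\W(0,T)$ of \eqref{model} in the sense of Definition~\ref{weakgen}. One should briefly remark that the natural notion of weak solution for \eqref{model} is exactly the specialization of Definition~\ref{weakgen} to $f=0$, since in that case the duality pairing $\sum_i\langle f_i,\varphi_i\rangle$ in \eqref{weakgene} simply drops out, so no independent argument for equivalence of formulations is required.

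Next, to establish the estimates \eqref{estmodel} and \eqref{estimmodelw}, I would substitute $\|f\|_{L^2(0,T;\V^\star)}=0$ into \eqref{estimationgen} and \eqref{estimationge}. The right-hand sides collapse to $C\|\rho^0\|_{\L^2}$, yielding precisely the bounds claimed, with the very same constant $C$ depending on $T$, $\underbar{D}$, $\bar{D}$, and continuously on $\|u\|_{\L^\infty}$, inherited from Theorem~\ref{theoremexistence}.

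There is no genuine obstacle here: all of the analytical work, namely verifying continuity and weak coercivity of the form $\mathcal{F}[u]$, invoking the Lions--Magenes type existence theorem, performing the exponential-shift energy estimates that yield \eqref{eq1}, and running the Gronwall argument for uniqueness, was already carried out in the three steps of the proof of Theorem~\ref{theoremexistence}. The only thing to do for Theorem~\ref{existencemodel} is to record that \eqref{model} is the $f=0$ instance of \eqref{general} and to read off the resulting estimates.
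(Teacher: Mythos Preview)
Your proposal is correct and matches the paper's approach exactly: the paper explicitly states that Theorem~\ref{existencemodel} is obtained ``by taking $f=0$ in \eqref{general}'', so the result is a direct corollary of Theorem~\ref{theoremexistence} with the estimates \eqref{estmodel}--\eqref{estimmodelw} following immediately from \eqref{estimationgen}--\eqref{estimationge}. There is nothing to add.
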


\begin{remark}
Let $u = (u_i)_{i=1}^N \in \mathbb{L}^\infty$ and $\rho^0 = (\rho_i^0)_{i=1}^N \in \mathbb{L}^2$, and let 
$\rho = (\rho_i)_{i=1}^N$ be the weak solution of \eqref{model}. Then we observe the following:
\begin{enumerate}
\item For every $t \in [0,T]$, if we integrate \eqref{model}, then  obtain
\begin{equation*}
	\sum_{i=1}^N \int_0^{l_i} \rho_i(x,t)\,dx
	=\sum_{i=1}^N \int_0^{l_i} \rho_i^0(x)\,dx -\sum_{i=1}^N \int_0^t \bigl(D_i \partial_x \rho_i + u_i \rho_i\bigr)\big|_{x=l_i}(\tau)\,d\tau.
\end{equation*}
In particular, if the outer boundary conditions at $x=l_i$ are chosen so that
$$
\left(D_i \partial_x \rho_i + u_i \rho_i\right)\big|_{x=l_i} = 0
\quad \text{for all } t\in[0,T],\; i=1,\dots,N,
$$
then the total mass is conserved:
\begin{equation}\label{mass}
\sum_{i=1}^N \int_0^{l_i} \rho_i(x,t)\,dx=\sum_{i=1}^N \int_0^{l_i} \rho_i^0(x)\,dx
\quad \text{for all } t\in[0,T].
\end{equation}

\item If $\rho_i^0(x) \ge 0$ a.e.\ in $(0,l_i)$ for all $i=1,\dots,N$, then we can prove that
$$
\rho_i(x,t) \ge 0 \quad \text{for a.e. } (x,t) \in (0,l_i)\times(0,T), \quad i=1,\dots,N.
$$
\end{enumerate}
\end{remark}

\section{Existence of optimal solutions and first-order optimality conditions}\label{exopt}
In this section, we prove the existence of optimal solutions to \eqref{opt}-\eqref{defuad}.
Let us introduce the control to state operator $G:\L^{\infty} \rightarrow \W(0,T)$ which associates to each $u\in \L^\infty$, the unique solution $\rho$ of \eqref{model}. Then, according to Theorem \ref{existencemodel}, $G$ is well-defined. 
We recall that 
\begin{equation*}
	\begin{array}{lll}
		J(\rho,u)&:=&\dis \frac{1}{2}\sum_{i=1}^{N}\int_{Q_i} \abs{\rho_i-\rho^d_i}^2\,\dq+\frac{1}{2}\sum_{i=1}^{N}\int_{0}^{l_i} \abs{\rho_i(T)-\rho^T_i}^2\,\dx\\
		&&\dis +\frac{1}{2}\sum_{i=1}^{N}\alpha_i\int_{Q_i}
		|u_i|^{2} \,\dq.
	\end{array}
\end{equation*}
\begin{theorem}\label{existenceofcontrols}
	Let $\rho^d=(\rho_i^d)_i \in L^2(0,T;\L^2)$, $\rho^T=(\rho_i^T)_i \in \L^2$ and  $\alpha_i>0$ for $i=1,2,\ldots,N$. Then  there exists at least a solution  $u\in \U$ of the optimal control problem \eqref{opt}-\eqref{defuad}.
\end{theorem}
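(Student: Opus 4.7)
The strategy is the classical direct method of the calculus of variations, adapted to the star graph setting using the a priori bounds from Theorem~\ref{existencemodel}. Since $J(\rho,u)\ge 0$, the infimum $m:=\inf_{u\in\U}J(G(u),u)$ is finite, so one can pick a minimizing sequence $(u^n)_n\subset \U$ with associated states $\rho^n:=G(u^n)\in \W(0,T)$, and $J(\rho^n,u^n)\to m$.

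Next I would extract a limit point. The definition of $\U$ shows that $(u^n)_n$ is uniformly bounded in $\L^\infty$, so by the Banach--Alaoglu theorem there is a subsequence (not relabelled) and $u^\star\in \L^\infty$ with $u^n \rightharpoonup^\star u^\star$ in $\L^\infty$. Because $\U$ is convex and strongly closed in $\L^\infty$ (hence weakly-$\star$ closed, since it is defined by pointwise a.e.\ bounds), one has $u^\star\in \U$. Applying Theorem~\ref{existencemodel} to each $u^n$ yields a uniform bound
\begin{equation*}
\|\rho^n\|_{\W(0,T)} \leq C\|\rho^0\|_{\L^2},
\end{equation*}
where $C$ depends continuously on $\|u^n\|_{\L^\infty}\leq \|u^{\max}\|_{\L^\infty}+\|u^{\min}\|_{\L^\infty}$. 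Extracting a further subsequence, I obtain $\rho^\star\in\W(0,T)$ such that $\rho^n\rightharpoonup \rho^\star$ in $L^2(0,T;\V)$, $\partial_t\rho^n\rightharpoonup \partial_t\rho^\star$ in $L^2(0,T;\V^\star)$, and, by the compact embedding $\W(0,T)\subset\subset L^2(0,T;\L^2)$ recalled in Section~\ref{prelim}, $\rho^n\to\rho^\star$ strongly in $L^2(0,T;\L^2)$. The continuous embedding $\W(0,T)\hookrightarrow \C([0,T];\L^2)$ also lets me identify the initial trace $\rho^\star(\cdot,0)=\rho^0$.

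The delicate step, and the one I expect to be the main obstacle, is passing to the limit in the bilinear drift term appearing in the weak formulation
\begin{equation*}
\sum_{i=1}^N \int_0^T\!\!\int_0^{l_i} u_i^n\,\rho_i^n\,\partial_x\varphi_i\,\dq
\end{equation*}
for test functions $\varphi\in L^2(0,T;\V)$. The product of a weakly-$\star$ $L^\infty$-convergent sequence with a weakly $L^2$-convergent sequence is generally not well behaved, so weak convergence of $\rho^n$ alone would not suffice. However, the strong $L^2(0,T;\L^2)$ convergence of $\rho^n$ saves the day: $\rho^n\,\partial_x\varphi_i\to \rho^\star\,\partial_x\varphi_i$ strongly in $L^1$, and pairing this with the weak-$\star$ convergence $u^n\rightharpoonup^\star u^\star$ in $L^\infty(Q_i)$ gives the desired limit $\int u_i^\star \rho_i^\star\,\partial_x\varphi_i\,\dq$. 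The linear terms pass trivially using weak convergence, and the junction, Dirichlet and Kirchhoff-type boundary conditions are encoded in the space $\V$, so they are preserved under weak limits. Uniqueness of the weak solution (Theorem~\ref{theoremexistence}) then identifies $\rho^\star = G(u^\star)$.

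Finally I would use lower semicontinuity to conclude. The tracking terms $\|\rho_i-\rho_i^d\|_{L^2(Q_i)}^2$ and $\|\rho_i(T)-\rho_i^T\|_{L^2(0,l_i)}^2$ are continuous under the strong $L^2(0,T;\L^2)$-convergence of $\rho^n$, together with the $\C([0,T];\L^2)$-convergence of $\rho^n(T)\to\rho^\star(T)$ (which follows from weak convergence in $\W(0,T)$ combined with the initial condition). The control cost $\tfrac{1}{2}\sum_i\alpha_i\|u_i\|_{L^2(Q_i)}^2$ is convex and strongly continuous on $L^2$, hence weakly lower semicontinuous; and since $u^n\rightharpoonup^\star u^\star$ in $L^\infty(Q_i)\subset L^2(Q_i)$ on a bounded domain, it is also weakly convergent in $L^2(Q_i)$, so lower semicontinuity applies. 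Combining the three contributions yields
\begin{equation*}
J(\rho^\star,u^\star) \leq \liminf_{n\to\infty} J(\rho^n,u^n) = m,
\end{equation*}
which, with $u^\star\in\U$, proves that $u^\star$ is an optimal control.
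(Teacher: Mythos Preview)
Your approach is essentially identical to the paper's: direct method with a minimizing sequence, uniform $\W(0,T)$-bounds from Theorem~\ref{existencemodel} (with the constant controlled via the $\L^\infty$-bounds of $\U$), weak-$\star$ compactness for the controls, Aubin--Lions compactness for the states, passage to the limit in the bilinear drift term by pairing strong $L^2(0,T;\L^2)$-convergence of $\rho^n$ with weak-$\star$ convergence of $u^n$, and weak lower semicontinuity of $J$. The paper does exactly this, with the same ingredients and in the same order.

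One point needs correction. You assert that $\rho^n(T)\to\rho^\star(T)$ strongly in $\L^2$, saying this ``follows from weak convergence in $\W(0,T)$ combined with the initial condition''. It does not: the embedding $\W(0,T)\hookrightarrow\C([0,T];\L^2)$ is continuous but not compact, so from weak convergence in $\W(0,T)$ you only obtain $\rho^n(T)\rightharpoonup\rho^\star(T)$ weakly in $\L^2$ (continuity of the trace map). This weak convergence is precisely what the paper records in \eqref{c2p} and uses. The fix is immediate and does not disturb your conclusion: the terminal term $\rho\mapsto\|\rho(T)-\rho^T\|_{\L^2}^2$ is convex and strongly continuous, hence weakly lower semicontinuous, so you still get the required $\liminf$ inequality for that piece---you just cannot claim continuity there.
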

\begin{proof}
We first note that $J(G(u),u)\geq 0$ for all $u\in \mathcal{U}_{ad}$. Let $\{(G(u_k),u_k)\}_k\subset \W(0,T)\times\mathcal{U}_{ad}$ be a minimizing sequence such that
	$$\lim_{k\to \infty}J(G(u_k),u_k)= \inf_{u\in\U}J(G(u),u). $$	
	Then, there exists a constant $C>0$ independent of $k$ such that
	\begin{equation}\label{boundcontrol}
	\|u_k\|_{L^2(0,T;\L^2)}\leq C.
	\end{equation}
Next, we have that $\rho_k:=G(u_k)$ is the solution of the following problem:
	\begin{equation}\label{modelk}
	\left\{
	\begin{array}{lllllllllllllllll}
	\displaystyle	\frac{\partial (\rho_i)_k}{\partial t}-D_i\frac{\partial^2 (\rho_i)_k}{\partial x^2}-\frac{\partial }{\partial x}\left((u_i)_k(\rho_i)_k\right)&=&0&\text{in}& Q_i,\\[6pt]
	\displaystyle	(\rho_i)_k(x,0)&=&\rho^0_i(x),& &x\in(0,l_i),\\[3pt]
	\displaystyle	\sum_{i=1}^N\left[D_i\frac{\partial (\rho_i)_k}{\partial x}(0,t)+\left((u_i)_k(\rho_i)_k\right)(0,t)\right]&=&0,& & \\[3pt]
	\displaystyle (\rho_i)_k(0,t)&=&(\rho_j)_k(0,t), & i\neq j,&  t\in (0,T),\\ [3pt]
	\displaystyle	(\rho_i)_k(l_i,t)&=&0.& & 
	\end{array}
	\right.
	\end{equation}
	
Then, it follows from \eqref{boundcontrol} and \eqref{estimmodelw} that there is a positive constant $C$ independent of $k$ such that
	\begin{equation}\label{b2}	
	\|\rho_k\|_{\W(0,T)} \leq C.
	\end{equation}
	From the boundedness of $\U$ in $\L^\infty$, \eqref{boundcontrol} and \eqref{b2}, we deduce the existence of  $\bar{u}\in \L^\infty$ and $\bar{\rho}\in \W(0,T)$ such that up to a subsequence and as $k\to \infty$, we have
	\begin{equation}\label{c1}
	u_k\rightharpoonup \bar{u}   \text{ weakly-$\star$ in } L^\infty(0,T;\L^\infty),
	\end{equation}	
    \vspace{-0.8cm}
	\begin{equation}\label{c1a}
	u_k\rightharpoonup \bar{u}   \text{ weakly in } L^2(0,T;\L^2),
	\end{equation}
 \vspace{-0.8cm}
	\begin{equation}\label{c2}
	\rho_k\rightharpoonup \bar{\rho}   \text{ weakly in }  \W(0,T),
	\end{equation}
	and  for each $t\in [0,T]$,
		\begin{equation}\label{c2p}
	\rho_k(t)\rightharpoonup \bar{\rho}(t)   \text{ weakly in } \L^2.
	\end{equation}
	Using the compact embedding $\W(0,T)\hookrightarrow L^2(0,T;\L^2)$, we deduce from \eqref{c2} that
	\begin{equation}\label{c3}
	\rho_k\to \bar{\rho}   \text{ strongly in }  L^2(0,T;\L^2).
	\end{equation}

	Finally, we pass to the limit in \eqref{modelk} using the above convergence results and classical arguments to obtain that $\bar{\rho}=\bar{\rho}(\bar{u})$, i.e. $\bar{\rho}$ is the state associated to the control $\bar{u}$.  
	In addition, since $\U$ is a closed convex subset of $L^2(0,T;\L^2)$, we have that $\U$ is weakly closed and \eqref{c1a} implies 
\begin{equation}\label{weakclosure}
\bar{u}\in \U.
\end{equation}
	Moreover, using \eqref{c2p}, \eqref{c3}, \eqref{weakclosure}, and the weak lower semi-continuity of the cost functional $J$, it follows that
	\begin{eqnarray*}
	J(G(\bar{u}),\bar{u})&\leq &\liminf_{k\to \infty}J(G(u_k),u_k)=\inf_{u\in\U}J(G(u),u).
	\end{eqnarray*}	
	This completes the proof.		
\end{proof}

\begin{remark}\label{nonunique} 
	The objective functional associated with the optimization problem \eqref{opt}-\eqref{defuad} is non-convex. As a result, the optimal solution may not be unique. Therefore, we focus on local optimal solutions, as defined below.
\end{remark}

\begin{definition}\label{defopt}
We say that $\bar{u}\in \U$ is a $L^2$-local solution of \eqref{opt} if there exists $\varepsilon>0$ such that $J(G(\bar{u}),\bar{u})\leq J(G(u),u)$ for every $u\in \U\cap B^2_\varepsilon(\bar{u})$ where $ B^2_\varepsilon(\bar{u})=\{u\in L^2(0,T;\L^2):\|u-\bar{u}\|_{L^2(0,T;\L^2)}\leq \varepsilon\}$. We say that $\bar{u}$ is a strict local minimum of \eqref{opt} if the above inequality is strict whenever $u\neq \bar{u}$.
\end{definition}

We note that the weak formulation of \eqref{model} can be rewritten as follows, 
\begin{equation}\label{weakg}
\begin{array}{lll}
\dis \left\langle\rho_t,\cdot\right\rangle + \mathcal{F}[u](\rho,\cdot) = 0, \;\text{ in }\; L^2(0,T;\V^\star),\\
\dis  \rho(\cdot,0)=\rho^0 \text{ in } \L^2,
\end{array}
\end{equation}
where $\mathcal{F}[u]$ is defined in \eqref{defFt}

Before proceeding further, we need to establish some regularity results for the control-to-state operator.
\subsection{Regularity results for the control-to-state operator}
Let us introduce the mapping
\begin{equation}\label{defG}
(\rho, u)\mapsto\mathcal{G}(\rho,u):= (\dis \rho_{t}+\mathcal{F}[u](\rho,\cdot),\rho(0)-\rho^0)
\end{equation}
from $ \W(0,T)\times \L^\infty\to L^2(0,T;\V^\star)\times \L^2,$ with $\mathcal{F}$ defined in \eqref{defFt}. The mapping $\mathcal{G}$ is well defined and the state equation $\rho$ solution of \eqref{model} can be viewed as the equation
\begin{equation}\label{eqG}
\mathcal{G}(\rho,u)=0.
\end{equation}
We have the following result. 
\begin{lemma}\label{lemmeG}
The mapping $\mathcal{G}$ is of class $\mathcal{C}^{\infty}$. Moreover, the control-to-state mapping $G:\L^\infty\to \W(0,T), u\mapsto \rho$ solution of \eqref{model} is also of class $\mathcal{C}^{\infty}$.
\end{lemma}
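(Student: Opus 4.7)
\smallskip

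\noindent\textbf{Proof plan for Lemma \ref{lemmeG}.} The approach is to observe that $\mathcal{G}$ is a continuous polynomial map of degree two in $(\rho,u)$, hence of class $\mathcal{C}^{\infty}$, and then apply the Banach-space implicit function theorem to the equation $\mathcal{G}(\rho,u)=0$ to transfer this regularity to $G$.

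I would first decompose $\mathcal{G}=\mathcal{G}_{\mathrm{lin}}+\mathcal{G}_{\mathrm{bilin}}$, where
$$
\mathcal{G}_{\mathrm{lin}}(\rho):=\Bigl(\rho_{t}+\sum_{i=1}^{N}\int_{0}^{l_i}D_i\,\partial_{x}\rho_i\,\partial_{x}(\cdot)_i\,\dx,\;\rho(0)-\rho^{0}\Bigr)
$$
is affine continuous from $\W(0,T)$ into $L^{2}(0,T;\V^{\star})\times \L^{2}$, using the very definition of $\W(0,T)$ together with the trace embedding $\W(0,T)\hookrightarrow \mathcal{C}([0,T];\L^{2})$, and
$$
\mathcal{G}_{\mathrm{bilin}}(\rho,u):=\Bigl(\sum_{i=1}^{N}\int_{0}^{l_i}u_i\rho_i\,\partial_{x}(\cdot)_i\,\dx,\;0\Bigr)
$$
is continuous bilinear from $\W(0,T)\times\L^{\infty}$ into the same product space. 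The boundedness of $\mathcal{G}_{\mathrm{bilin}}$ is immediate from the pointwise bound $\|\mathcal{G}_{\mathrm{bilin}}(\rho,u)(t)\|_{\V^{\star}}\leq \|u(t)\|_{\L^{\infty}}\|\rho(t)\|_{\L^{2}}$, which, when combined with the embedding $\W(0,T)\hookrightarrow L^{2}(0,T;\L^{2})$, yields continuity into $L^{2}(0,T;\V^{\star})$. Since continuous multilinear maps between Banach spaces are $\mathcal{C}^{\infty}$, the sum $\mathcal{G}$ is a $\mathcal{C}^{\infty}$ mapping.

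For the regularity of $G$, I would fix an arbitrary $u_{0}\in\L^{\infty}$, set $\rho_{0}:=G(u_{0})$, and compute
$$
D_{\rho}\mathcal{G}(\rho_{0},u_{0})\eta=\bigl(\eta_{t}+\mathcal{F}[u_{0}](\eta,\cdot),\,\eta(0)\bigr),\qquad \eta\in \W(0,T),
$$
which is a continuous linear operator independent of $\rho_{0}$ because $\mathcal{G}$ is affine in $\rho$ for fixed $u$. Given any $(f,\eta^{0})\in L^{2}(0,T;\V^{\star})\times \L^{2}$, the equation $D_{\rho}\mathcal{G}(\rho_{0},u_{0})\eta=(f,\eta^{0})$ is precisely the non-homogeneous Fokker--Planck problem \eqref{general} with drift $u_{0}$, source $f$, and initial datum $\eta^{0}$; Theorem \ref{theoremexistence} together with the estimates \eqref{estimationgen}--\eqref{estimationge} then provides a unique solution $\eta\in\W(0,T)$ depending continuously on $(f,\eta^{0})$. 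Hence $D_{\rho}\mathcal{G}(\rho_{0},u_{0})$ is a continuous linear bijection between Banach spaces and, by the open mapping theorem, an isomorphism. The $\mathcal{C}^{\infty}$ version of the implicit function theorem yields a local $\mathcal{C}^{\infty}$ solution $u\mapsto\widetilde{G}(u)$ to $\mathcal{G}(\cdot,u)=0$ near $u_{0}$, and the uniqueness statement of Theorem \ref{existencemodel} forces $\widetilde{G}=G$ on that neighborhood; as $u_{0}\in\L^{\infty}$ was arbitrary, $G$ is of class $\mathcal{C}^{\infty}$. The only delicate step is the continuity of the bilinear drift contribution into $L^{2}(0,T;\V^{\star})$; once this is in hand, the $\mathcal{C}^{\infty}$ regularity of $\mathcal{G}$ follows automatically from its polynomial structure and the remainder is a direct appeal to Theorem \ref{theoremexistence} and the implicit function theorem.
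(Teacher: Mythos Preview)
Your proposal is correct and follows essentially the same route as the paper: establish $\mathcal{C}^{\infty}$ regularity of $\mathcal{G}$ from its polynomial (affine plus continuous bilinear) structure, compute $\partial_{\rho}\mathcal{G}$, verify it is an isomorphism via the well-posedness of the linear problem, and apply the implicit function theorem. Your treatment is in fact more explicit than the paper's, which dismisses the first part as ``straightforward'' and invokes \cite[Theorem~11.7]{chipot2000} directly rather than Theorem~\ref{theoremexistence} for the isomorphism; you also spell out the globalization step (local inverse $\widetilde{G}$ coincides with $G$ by uniqueness), which the paper leaves implicit.
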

\begin{proof}
	The first part of the lemma is straightforward. To prove the second part, we use the Implicit Function Theorem. We have, for any $\varphi\in \W(0,T)$, 
	$$\partial_\rho\mathcal{G}(\rho,u)\varphi=(\dis \varphi_{t}+\mathcal{F}[u](\varphi,\rho),\varphi(0)).$$
	Since $\mathcal{F}[u](\cdot,\cdot)$ is continuous and weakly coercive (see proof of Theorem \ref{theoremexistence}), we deduce from \cite[Theorem 11.7]{chipot2000} that for any $\varphi^0\in \L^2$ and $f\in L^2(0,T; \V^\star)$, the linear problem
	\begin{equation}\label{weakgphi}
	\begin{array}{lll}
	\dis \left\langle\varphi_t,\cdot\right\rangle + \mathcal{F}[u](\varphi,\cdot) = f, \;\text{ in }\; L^2(0,T;\V^\star),\\
	\dis  \varphi(\cdot,0)=\varphi^0 \text{ in } \L^2,
	\end{array}
	\end{equation}
	admits a unique weak solution  $\varphi:=\varphi(\varphi^0,f)$ in $\W(0,T)$ depending continuously on $\varphi^0\in \L^2$ and on $f\in L^2(0,T; \V^\star)$. Hence,
	$\dis \partial_\rho\mathcal{G}(\rho,u)$ defines an isomorphism from $\W(0,T)$ to $L^2(0,T; \V^\star)\times \L^2$. The Implicit Function Theorem applies, and we deduce that $\mathcal{G}(\rho,u)=0$ implicitly defines the control-to-state operator $G:u\mapsto \rho$ which is itself of class  $\mathcal{C}^{2}$.
\end{proof}
The next result gives the first derivative of the control-to-state operator. Let $w\in \L^\infty$. We introduce the functional $d[w]\in \V^\star$ defined by:
\begin{equation}\label{defd}
d[w](\varphi)=-\sum_{i=1}^{N}\int_{0}^{l_i}w_i\frac{\partial \varphi_i}{\partial x}\,\dx.
\end{equation}
Next, we have the following result.
\begin{proposition}\label{differentiability}
	Let $u,v\in \L^\infty$.	Then, the first-order derivative of the control-to-state operator $G'(u)$ is given by $G'(u)v=z_v$, where $z_v\in \W(0,T)$ is the unique solution to
	\begin{equation}\label{diff1}
	\begin{array}{lll}
	\dis \left\langle z_t,\cdot\right\rangle + \mathcal{F}[u](z,\cdot) = d[\rho v], \;\text{ in }\; L^2(0,T;\V^\star),\\
	\dis  z(\cdot,0)=0 \text{ in } \L^2,
	\end{array}
	\end{equation}
	where $\mathcal{F}[u]$ is defined in \eqref{defFt}.
\end{proposition}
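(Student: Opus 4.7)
The plan is to obtain $G'(u)$ by implicit differentiation of the abstract identity $\mathcal{G}(G(u), u) = 0$ supplied by Lemma \ref{lemmeG}. Since $\mathcal{G}$ is of class $\mathcal{C}^\infty$ and $\partial_\rho \mathcal{G}(\rho, u)$ is an isomorphism from $\W(0,T)$ onto $L^2(0,T;\V^\star) \times \L^2$, the chain rule applied to $u \mapsto \mathcal{G}(G(u),u) = 0$ yields
$$\partial_\rho \mathcal{G}(\rho, u)\bigl(G'(u)v\bigr) + \partial_u \mathcal{G}(\rho, u)\,v = 0,$$
so the task reduces to computing the two partial derivatives and identifying the linear equation satisfied by $z_v := G'(u)v$.

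Next, I would compute $\partial_u \mathcal{G}(\rho, u)v$. In the definition \eqref{defG}, only the bilinear form $\mathcal{F}[u](\rho,\cdot)$ depends on $u$, and this dependence is linear, through the single term $\sum_{i=1}^N \int_0^{l_i} u_i \rho_i \frac{\partial \varphi_i}{\partial x}\,\dx$. Differentiating in the direction $v$ produces $\sum_{i=1}^N \int_0^{l_i} v_i \rho_i \frac{\partial \varphi_i}{\partial x}\,\dx$, which by \eqref{defd} equals $-d[\rho v](\varphi)$. Since the initial-condition component $\rho(0)-\rho^0$ of $\mathcal{G}$ does not involve $u$, this gives $\partial_u \mathcal{G}(\rho, u)v = (-d[\rho v],0)$. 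Combined with the formula for $\partial_\rho \mathcal{G}(\rho,u)$ already recorded in the proof of Lemma \ref{lemmeG}, the chain-rule identity collapses into exactly the system \eqref{diff1} for $z_v$.

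To close the argument, I would invoke Theorem \ref{theoremexistence} with zero initial data and right-hand side $f = d[\rho v]$ to obtain the unique weak solution $z_v \in \W(0,T)$. The only point that needs care is checking that $d[\rho v] \in L^2(0,T;\V^\star)$: since $\rho \in L^2(0,T;\V) \hookrightarrow L^2(0,T;\L^2)$ and $v \in \L^\infty$, the product $\rho v$ lies in $L^2(0,T;\L^2)$, and the pointwise-in-$t$ estimate $|d[\rho v](\varphi)| \le \|\rho v\|_{\L^2}\|\varphi\|_{\V}$ delivers the required regularity after squaring and integrating over $(0,T)$. I do not anticipate a substantive obstacle: everything rests on the same continuity bounds on $\mathcal{F}[u]$ that already underpinned Theorem \ref{theoremexistence}, and the only subtlety is to keep the sign convention and the test-function placement straight when identifying $d[\rho v]$.
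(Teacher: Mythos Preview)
Your proposal is correct and follows essentially the same approach as the paper: both invoke Lemma \ref{lemmeG} to justify the existence of $G'(u)$, obtain \eqref{diff1} by differentiating the identity $\mathcal{G}(G(u),u)=0$, and then apply Theorem \ref{theoremexistence} with $f=d[\rho v]$ to secure existence and uniqueness of $z_v$. You are simply more explicit than the paper, which labels the derivation ``straightforward computations'' and asserts $d[\rho v]\in L^2(0,T;\V^\star)$ without the verification you supply.
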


\begin{proof}
	Let $u,v\in \L^\infty$. The derivative $G'(u)v$ exists according to Lemma \ref{lemmeG} and \eqref{diff1} can be easily derived through straightforward computations. Additionally, note that the system \eqref{diff1} can be written in a form similar to the system \eqref{weakgene}, where  $f=d[\rho v]\in L^2(0,T;\V^\star)$. Therefore, we can apply Theorem \ref{theoremexistence}  to deduce that \eqref{diff1} admits a unique solution $z_v\in \W(0,T)$. This ends the proof.
\end{proof}
\begin{remark}\label{strong formulation}
	We observe that problem \eqref{diff1} can be seen as the weak formulation of the following problem:
	\begin{equation}\label{diff1strong}
	\left\{
	\begin{array}{lllllllllllllllll}
	\displaystyle	\frac{\partial z_i}{\partial t}-D_i\frac{\partial^2 z_i}{\partial x^2}-\frac{\partial }{\partial x}\left(u_iz_i\right)&=&\dis \frac{\partial}{\partial x}(\rho_iv_i)&\text{in}& Q_i,\\[6pt]
	\displaystyle	z_i(x,0)&=&0& &x\in(0,l_i),\\[3pt]
	\displaystyle	\sum_{i=1}^N\left[D_i\frac{\partial z_i}{\partial x}(0,t)+u_i(0,t)z_i(0,t)\right]&=&-\rho_i(0,t)v_i(0,t),& & \\[3pt]
	\displaystyle z_i(0,t)&=&z_j(0,t), & i\neq j,&  t\in (0,T),\\ [3pt]
	\displaystyle	z_i(l_i,t)&=&0.& & 
	\end{array}
	\right.
	\end{equation}
\end{remark}

\subsection{The adjoint equation and first-order necessary optimality conditions}
Next, we define the adjoint state $q$ as the unique weak solution to the adjoint equation:
\begin{equation}\label{adjoint}
\left\{
\begin{array}{lllllllllllllllll}
\displaystyle -	\frac{\partial q_i}{\partial t}-D_i\frac{\partial^2 q_i}{\partial x^2}+u_i\frac{\partial q_i }{\partial x}&=&\rho_i-\rho^d_i&\text{in}& Q_i,\\[6pt]
\displaystyle	q_i(x,T)&=&\rho_i(x,T)-\rho^T_i(x)& &x\in(0,l_i),\\[3pt]
\displaystyle	\sum_{i=1}^ND_i\frac{\partial q_i}{\partial x}(0,t)&=&0,& & \\[3pt]
\displaystyle q_i(0,t)&=&q_j(0,t), & i\neq j,& t\in (0,T),\\ [3pt]
\displaystyle	q_i(l_i,t)&=&0.& & 
\end{array}
\right.
\end{equation}

We now give a definition of weak solution to the adjoint equation \eqref{adjoint}.
\begin{definition}\label{weakadjoint}
	Let $\rho^d=(\rho^d_i)\in L^2(0,T;\L^2)$, $\rho^T=(\rho^T_i)\in \L^2$  and $u\in \L^\infty$.   We  say that a function
	$q\in \W(0,T)$ is a weak solution to \eqref{adjoint}, if:
	\begin{equation}\label{weakad}
	\begin{array}{lll}
	\dis- \left\langle q_t,\cdot\right\rangle + \mathcal{F}[u](\cdot,q) = \left\langle \rho-\rho^d,\cdot\right\rangle \;\text{ in }\; L^2(0,T;\V^\star),\\
	\dis  q(\cdot,T)= \rho(\cdot,T)-\rho^T \text{ in } \L^2,
	\end{array}
	\end{equation}
	where $\mathcal{F}[u]$ is defined in \eqref{defFt}.
\end{definition}
\begin{proposition}\label{existenceadjoint}
Let	 $\rho^d=(\rho^d_i)\in L^2(0,T;\L^2)$, $\rho^T=(\rho^T_i)\in \L^2$  and $u\in \L^\infty$. Then, there exists a unique solution $q\in \W(0,T)$ to \eqref{weakad}.  Moreover, there exists a constant $C>0$ depending on $T$, $\underbar{D}$, and continuously on $\|u\|_{\L^\infty}$ such that
	 \begin{equation}\label{estadjoint}
	 \|q\|_{\W(0,T)}\leq  C\left(\|\rho^0\|_{\L^2}+\|\rho^d\|_{L^2(0,T;\L^2)}+\|\rho^T\|_{\L^2}\right).
	 \end{equation}
\end{proposition}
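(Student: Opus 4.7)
The plan is to reduce the backward-in-time adjoint system to a forward problem by a time reversal and then invoke (a slight variant of) Theorem~\ref{theoremexistence}. Set $\tilde q(t):=q(T-t)$, $\tilde u(t):=u(T-t)$, $\tilde\rho(t):=\rho(T-t)$ and $\tilde\rho^{d}(t):=\rho^{d}(T-t)$. A direct change of variable turns \eqref{weakad} into
\begin{equation*}
\langle \tilde q_t,\varphi\rangle+\mathcal{F}[\tilde u](\varphi,\tilde q)=\langle \tilde\rho-\tilde\rho^{d},\varphi\rangle \text{ in } L^2(0,T;\V^\star),\qquad \tilde q(0)=\rho(T)-\rho^T \text{ in } \L^2.
\end{equation*}
This is of the same abstract type as \eqref{weakgene}, but with the \emph{transposed} form $\tilde{\mathcal{F}}[\tilde u](w,\varphi):=\mathcal{F}[\tilde u](\varphi,w)$ in the role of the energy form, with initial datum $\rho(T)-\rho^T\in\L^2$ and with source $\tilde f:=\tilde\rho-\tilde\rho^{d}$.

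Next I would verify that $\tilde{\mathcal{F}}$ satisfies the two assumptions that underpin Step~1 of Theorem~\ref{theoremexistence}. Continuity is immediate from \eqref{contGt}, since $\tilde{\mathcal{F}}[\tilde u](w,\varphi)$ and $\mathcal{F}[\tilde u](\varphi,w)$ are bounded by the same quantity $C\|w\|_{\V}\|\varphi\|_{\V}$. For the G\aa{}rding inequality, I observe that $\tilde{\mathcal{F}}[\tilde u](w,w)=\mathcal{F}[\tilde u](w,w)$, so the identical Young-inequality argument used to derive \eqref{coercGt} yields $\tilde{\mathcal{F}}[\tilde u](w,w)\ge \gamma\|w\|_{\V}^{2}-\lambda\|w\|_{\L^2}^{2}$ with the same constants $\gamma,\lambda$ depending continuously on $\|u\|_{\L^\infty}$. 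Combined with $\L^2\hookrightarrow\V^\star$ and Theorem~\ref{existencemodel}, which guarantees $\tilde f\in L^2(0,T;\V^\star)$ together with $\|\tilde f\|_{L^2(0,T;\V^\star)}\le C\bigl(\|\rho^0\|_{\L^2}+\|\rho^{d}\|_{L^2(0,T;\L^2)}\bigr)$, the Lions-type theorem cited in Step~1 of Theorem~\ref{theoremexistence} (namely \cite[Thm.~11.7]{chipot2000}) provides a unique $\tilde q\in\W(0,T)$; reverting to the variable $t$ gives the unique weak solution $q$ of \eqref{weakad}.

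For the quantitative estimate \eqref{estadjoint}, I would repeat the change of variable $\tilde\eta=e^{-rt}\tilde q$ performed in Step~2 of the proof of Theorem~\ref{theoremexistence}, test with $\tilde\eta$ and use Young's inequality on the convective cross term; choosing $r=\tfrac{3\underline D}{4}+\tfrac{\|u\|_{\L^\infty}^{2}}{\underline D}$ yields
\begin{equation*}
\|\tilde q\|_{\mathcal{C}([0,T];\L^2)}+\|\tilde q\|_{L^2(0,T;\V)}\le C\bigl(\|\tilde q(0)\|_{\L^2}+\|\tilde f\|_{L^2(0,T;\V^\star)}\bigr),
\end{equation*}
and a straightforward bound on $\tilde q_t$ from the equation promotes this to the $\W(0,T)$-norm. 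Since $\|\tilde q(0)\|_{\L^2}=\|\rho(T)-\rho^T\|_{\L^2}\le \|\rho\|_{\mathcal{C}([0,T];\L^2)}+\|\rho^T\|_{\L^2}\le C\|\rho^0\|_{\L^2}+\|\rho^T\|_{\L^2}$ by \eqref{estmodel}, the desired inequality \eqref{estadjoint} follows.

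The only mildly delicate point is the passage to the transposed form $\tilde{\mathcal{F}}$: one has to check that the coercivity argument—where the sign of the drift term $u_i w_i \partial_x w_i$ was absorbed into the diffusive part—goes through unchanged when the roles of ``state'' and ``test function'' are exchanged. As noted above, this is true because the troublesome cross term has the symmetric form $\int u_i w_i\partial_x w_i\,dx$ when one sets $\varphi=w$, so no additional regularity on $u$ (beyond $\L^\infty$) is required and everything reduces to the estimates already established for the forward equation.
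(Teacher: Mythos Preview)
Your proof is correct and follows essentially the same approach as the paper: a time reversal $t\mapsto T-t$ reduces the backward adjoint problem to a forward one, after which the continuity/G\aa{}rding argument of Theorem~\ref{theoremexistence} applies verbatim (since $\tilde{\mathcal F}(w,w)=\mathcal F(w,w)$), and the estimate \eqref{estadjoint} is obtained by combining the resulting energy bound with \eqref{estmodel} to control $\|\rho(T)\|_{\L^2}$. The paper's own proof is a two-line sketch to precisely this effect; your version simply fills in the details.
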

\begin{proof} We make a change of variable $t\mapsto q(T-t,x)$ and using the same arguments as in Theorem \ref{theoremexistence}, we can deduce the existence of a unique solution $q\in \W(0,T)$ to \eqref{weakad} and the estimate \eqref{estadjoint} follows from \eqref{estimmodelw}.
\end{proof}
\begin{remark}
	We note that when $u \in \mathcal{U}_{ad}$ is taken in Theorems \ref{theoremexistence}, \ref{existencemodel} and Proposition \ref{existenceadjoint}, the constant $C$ no longer depends on $\|u\|_{\L^\infty}$ but rather on the $\L^\infty $-bounds of $u^{\min}$ and $u^{\max}$. 
\end{remark}

Let us introduce the reduced cost functional as follows: 
\begin{equation}\label{defj}
\mathcal{J}(u):=J(G(u),u).
\end{equation}
The next proposition gives regularity results for $\mathcal{J}$.
\begin{proposition}\label{diff4}
	Let $\rho$ be the solution of \eqref{model}.  Then, the functional $\mathcal{J}:\L^\infty\to \R$ defined in \eqref{defj} is continuously Fr\'echet differentiable and for every $u, v\in \L^\infty$, we have
	\begin{equation}\label{diff5}
	\begin{array}{lllll}
	\mathcal{J}'(u)v &=& \dis \sum_{i=1}^{N}\int_{Q_i}\left(\alpha_i u_i-\rho_i\frac{\partial q_i}{\partial x}\right)v_i\, \dq,
	\end{array}
	\end{equation}
	where $q$ is the unique weak solution to the adjoint equation \eqref{adjoint}.
\end{proposition}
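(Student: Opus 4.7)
The plan is to combine the chain rule, via the smoothness of the control-to-state map, with the standard duality identity between the sensitivity and adjoint equations. First, since $J:\W(0,T)\times \L^\infty\to\R$ is a continuous quadratic functional (hence $\mathcal{C}^\infty$) and $G:\L^\infty\to\W(0,T)$ is $\mathcal{C}^\infty$ by Lemma~\ref{lemmeG}, the chain rule immediately gives that $\mathcal{J}=J(G(\cdot),\cdot)$ is continuously Fréchet differentiable and, writing $z:=G'(u)v$,
\begin{equation*}
\mathcal{J}'(u)v \;=\; \int_0^T\langle \rho-\rho^d,z\rangle\,\dt \;+\; \bigl(\rho(T)-\rho^T,z(T)\bigr)_{\L^2} \;+\; \sum_{i=1}^N\alpha_i\int_{Q_i}u_iv_i\,\dq,
\end{equation*}
after computing $J_\rho$ and $J_u$ directly from the definition of $J$.

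Second, I would eliminate the $z$-dependence of the first two terms via the adjoint state $q$ given by \eqref{weakad}. By Proposition~\ref{differentiability}, $z\in\W(0,T)$ is an admissible test function in \eqref{weakad}, and $q\in\W(0,T)$ is admissible in \eqref{diff1}; choosing these tests one obtains
\begin{align*}
\int_0^T\langle z_t,q\rangle\,\dt+\int_0^T\mathcal{F}[u](z,q)\,\dt &= \int_0^T d[\rho v](q)\,\dt,\\
-\int_0^T\langle q_t,z\rangle\,\dt+\int_0^T\mathcal{F}[u](z,q)\,\dt &= \int_0^T\langle \rho-\rho^d,z\rangle\,\dt.
\end{align*}
Subtracting cancels the $\mathcal{F}[u](z,q)$ terms. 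Using the integration by parts formula for $\W(0,T)$ functions together with $z(0)=0$ and the terminal condition $q(T)=\rho(T)-\rho^T$, we get
\begin{equation*}
\int_0^T\!\bigl(\langle z_t,q\rangle+\langle q_t,z\rangle\bigr)\,\dt=(z(T),q(T))_{\L^2}=\bigl(z(T),\rho(T)-\rho^T\bigr)_{\L^2}.
\end{equation*}
Combining the two identities and using definition \eqref{defd} of $d[\rho v]$ yields
\begin{equation*}
\int_0^T\langle \rho-\rho^d,z\rangle\,\dt + \bigl(\rho(T)-\rho^T,z(T)\bigr)_{\L^2} = \int_0^T d[\rho v](q)\,\dt = -\sum_{i=1}^N\int_{Q_i}\rho_iv_i\frac{\partial q_i}{\partial x}\,\dq.
\end{equation*}

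Substituting this back into the chain-rule expression for $\mathcal{J}'(u)v$ gives
\begin{equation*}
\mathcal{J}'(u)v=\sum_{i=1}^N\int_{Q_i}\Bigl(\alpha_i u_i-\rho_i\frac{\partial q_i}{\partial x}\Bigr)v_i\,\dq,
\end{equation*}
which is \eqref{diff5}. The main obstacle is the rigorous justification of the integration-by-parts-in-time step and of the mutual testing: it requires that both $z,q\in\W(0,T)$ (so the embedding $\W(0,T)\hookrightarrow \mathcal{C}([0,T];\L^2)$ provides the boundary values) and that the bilinear form $\mathcal{F}[u]$ arising from each equation is the \emph{same} form evaluated at $(z,q)$, so that its contribution cancels cleanly in the subtraction; the transmission and Dirichlet conditions built into $\V$ are precisely what makes this cancellation work.
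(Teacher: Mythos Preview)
Your proposal is correct and follows essentially the same approach as the paper: chain rule via Lemma~\ref{lemmeG} to obtain the expression in terms of $z=G'(u)v$, then the duality between the sensitivity equation \eqref{diff1} and the adjoint equation \eqref{weakad} to replace the $z$-terms by $-\sum_i\int_{Q_i}\rho_iv_i\partial_x q_i\,\dq$. Your write-up is in fact more explicit than the paper's, which compresses the mutual-testing/subtraction/integration-by-parts step into a single sentence.
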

\begin{proof}
	First, by the chain rule, we find that $\mathcal{J}$ is continuously Fr\'echet differentiable because, as stated in Lemma \ref{lemmeG}, $G$ possesses this property.\par 		
	Let $u,v\in \L^\infty$.  After performing some straightforward classical computations, we obtain 
	\begin{equation}\label{e1}
	\mathcal{J}'(u)v=\sum_{i=1}^{N}\int_{Q_i}z_i(\rho_i-\rho_i^d)\,\dq+ \sum_{i=1}^{N}\int_{Q_i}z_i(T)(\rho_i(T)-\rho_i^T)\,\dx +\sum_{i=1}^{N}\alpha_i\int_{Q_i}u_iv_i\,\dq.
	\end{equation}
	Now, if we test the first equation of \eqref{diff1} by $q$ solution to the adjoint state \eqref{adjoint} and we integrate by parts over $(0,T)$, we arrive to
	\begin{equation}\label{e2}
\sum_{i=1}^{N}\int_{Q_i}z_i(\rho_i-\rho_i^d)\,\dq+ \sum_{i=1}^{N}\int_{Q_i}z_i(T)(\rho_i(T)-\rho_i^T)\,\dx =-\sum_{i=1}^{N}\int_{Q_i}\rho_iv_i\frac{\partial q_i}{\partial x}\,\dq.
	\end{equation}
	Combining \eqref{e1} and \eqref{e2}, we arrive to \eqref{diff5}. 
\end{proof}
Using the above results, we derive the first-order necessary conditions for optimality and characterize the optimal solutions. The proof is based on classical methods and relies on the convexity of the set of admissible controls.
\begin{theorem}\label{theoSO}
	Let $\bar{u}\in\mathcal U_{ad}$ be a $L^2$-local solution of \eqref{opt}. Then,
	\begin{equation}\label{ineq}
	\mathcal{J}'(\bar{u})(v-\bar{u})\geq 0\;\;\;\text{for every}\;\;\; v\in \U, \quad\text{equivalently}
	\end{equation}
    \vspace{-0.8cm}
	\begin{equation}\label{ineq1}
	\dis \sum_{i=1}^{N}\int_{Q_i}\left(\alpha_i \bar{u}_i-\rho_i\frac{\partial q_i}{\partial x}\right)(v_i-\bar{u}_i)\, \dq\geq 0\;\;\;\text{for every}\;\;\; v\in \U,
	\end{equation}
	where $q$ is the unique weak solution to \eqref{adjoint} and $\rho$ is the unique weak solution to \eqref{model}.
\end{theorem}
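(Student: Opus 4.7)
The plan is to employ the standard variational argument that turns local optimality on a convex admissible set into a first-order variational inequality. The two main ingredients are the convexity of $\U$ and the Fr\'echet differentiability of $\mathcal{J}$ established in Proposition~\ref{diff4}; the identity \eqref{ineq1} will then follow by inserting the explicit expression \eqref{diff5} into \eqref{ineq}.

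First, I fix an arbitrary $v\in\U$ and form the convex combination
$$u_\lambda := (1-\lambda)\bar u + \lambda v, \qquad \lambda \in [0,1].$$
Since $\U$ is defined by pointwise bounds, it is convex, and therefore $u_\lambda \in \U$ for every $\lambda \in [0,1]$. The key observation is
$$\|u_\lambda - \bar u\|_{L^2(0,T;\L^2)} = \lambda\,\|v-\bar u\|_{L^2(0,T;\L^2)},$$
which allows me to choose $\lambda>0$ sufficiently small so that $u_\lambda \in \U \cap B^2_\varepsilon(\bar u)$, with $\varepsilon$ given by Definition~\ref{defopt}. (If $v=\bar u$ the inequality is trivial; otherwise any $0<\lambda \le \varepsilon/\|v-\bar u\|_{L^2(0,T;\L^2)}$ works.)

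Next, by the $L^2$-local optimality of $\bar u$,
$$\mathcal{J}(\bar u) \leq \mathcal{J}(u_\lambda), \qquad \text{and hence} \qquad \frac{\mathcal{J}(u_\lambda) - \mathcal{J}(\bar u)}{\lambda} \geq 0$$
for all admissible $\lambda$. Passing to the limit $\lambda \to 0^{+}$ and invoking the Fr\'echet differentiability of $\mathcal{J}$ from Proposition~\ref{diff4}, the difference quotient converges to $\mathcal{J}'(\bar u)(v-\bar u)$, which establishes \eqref{ineq}. Finally, substituting the explicit formula \eqref{diff5} at $u=\bar u$ with direction $v-\bar u$ immediately yields \eqref{ineq1}.

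I do not expect any genuine obstacle: the argument is the textbook derivation of the first-order necessary condition for minimization over a convex set. The only mild subtlety is that local optimality in Definition~\ref{defopt} is posed in the $L^2$-topology (weaker than the natural $L^\infty$-topology of $\U$), but this is exactly what is needed to cheaply absorb the perturbation $\lambda(v-\bar u)$ by taking $\lambda$ small, while the $\L^\infty$-bounds on $u_\lambda$ are automatically preserved by convexity of $\U$.
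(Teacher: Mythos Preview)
Your proposal is correct and matches the paper's approach: the paper does not give a detailed proof but states that it ``is based on classical methods and relies on the convexity of the set of admissible controls,'' which is precisely the convex-combination argument you carry out. Your handling of the interplay between the $L^2$-local optimality and the $\L^\infty$-Fr\'echet differentiability (Proposition~\ref{diff4}) is also correct, since $u_\lambda\to\bar u$ in $\L^\infty$ (hence in $L^2(0,T;\L^2)$) as $\lambda\to0^+$.
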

The optimality condition derived in \eqref{ineq1} can be written as a Min-max model \cite{manzoni2021optimal}.
Having established the optimality conditions for the control problem \eqref{opt}-\eqref{defuad}, we now proceed to develop a wavelet-based numerical method aimed at approximating the solution of the associated system of necessary optimality conditions, as defined by equations \eqref{model}, \eqref{adjoint}, and \eqref{ineq1}.

\section{Numerical Illustration}\label{numerical illus}
\subsection{Shifted Legendre Wavelets}
We employ the shifted Legendre wavelet collocation method \cite{Kumari11062025} to approximate the solution of the necessary optimality conditions derived above. Wavelet-based methods have been shown to outperform classical orthogonal polynomial techniques in approximating non-smooth functions, owing to its advantageous properties such as compact support, vanishing moments, and multiresolution analysis. For a general overview of wavelets, we refer the reader to \cite{mehra2018wavelets}. The shifted Legendre scaling functions
are defined on the interval $[0,1)$ as follows:
\begin{align}
    \phi_{n,m}^J(t)=\begin{cases}
        2^\frac{J-1}{2} \sqrt{2m + 1} L_m(2^{J-1}t - n + 1) \ \ &\text{if} \ \frac{n-1}{2^{J-1}} \leq t < \frac{n}{2^{J-1}},\\
        0 &\text{otherwise},
    \end{cases}
\end{align}
where $n = 1,2,\ldots2^{J-1}$ is the translation parameter, $J=1,2,\ldots$ is the dilation parameter and $m=0,1,2,\ldots,M-1$ is the degree of the shifted Legendre polynomials \cite{bhrawy2012shifted}, defined as
$$L_m(t)=\sum_{i=0}^m c_{i,m} t^{i},\quad
\text{where}\quad
c_{0,0}=1, \ \ c_{i,m}=\frac{\prod_{j=0}^{m-1}(1+i+j)}{\prod_{\substack{j=0\\j\neq i}}^m(i-j)}.$$

 The shifted Legendre scaling function expansion of an arbitrary function $f(t) \in L_2(0,1)$ can be expressed as
\begin{align*}
    f(t)= \sum_{n=1}^\infty \sum_{m=0}^{M-1}f_{n,m}^J\phi_{n,m}^J(t),
\end{align*}
where the coefficients are given by $f_{n,m}^J = \int_0^1 f(t)\phi_{n,m}^J(t)\,dt$, and $J \to \infty$. However, for practical numerical implementation using the collocation method, we consider a truncated form of this expansion, given by

\begin{align}
    f(t) \approx \sum_{n=1}^{2^{J-1}} \sum_{m=0}^{M-1}f_{n,m}^J\phi_{n,m}^J(t) = F^T\Phi_{2^{J-1}M}(t),
\end{align}
such that it is the best approximation of $f(t)$ in $L_2$ sense, where $F$ and $\Phi_{2^{J-1}M}(t)$ are vectors of length $2^{J-1}M$, defined respectively as
\begin{equation}
\begin{aligned}\label{span}
F=&[f_{1,0}^J,f_{1,1}^J,\ldots,f_{1,M-1}^J,f_{2,0}^J,\ldots,f_{2,M-1}^J,\ldots,f_{2^{J-1},0}^J,\ldots,f_{2^{J-1},M-1}^J]^T,\\
\Phi_K(t)=&\Phi_{2^{J-1}M}(t)=[\phi_{1,0}^J(t),\phi_{1,1}^J(t)\ldots,\phi_{2^{J-1},0}^J(t),\ldots,\phi_{2^{J-1},M-1}^J(t)]^T \\
     =& [\phi_{1}^J(t),\phi_{2}^J(t),\ldots,\phi_{M}^J(t),\ldots,\phi_{K}^J(t)]^T,
\end{aligned}
\end{equation}
where $K=2^{J-1}M$.
Similarly, a function in two variables $f(x,t)$ can be expanded as
\begin{align}\label{funcapprox}
  f(x,t) \approx \sum_{n_1=1}^{2^{J_1-1}} \sum_{m_1=0}^{M_1-1}\sum_{n_2=1}^{2^{J_2-1}} \sum_{m_2=0}^{M_2-1}f_{n_1,m_1}^{n_2,m_2}\phi_{n_1,m_1}^{J_1}(x)\phi_{n_2,m_2}^{J_2}(t) = (\Phi_{K_1}(x))^T F\Phi_{K_2}(t),   
\end{align}
such that it is the best approximation of $f(x,t)$ in $L_2$ sense, where $K_1=2^{J_1-1}M_1$, $K_2=2^{J_2-1}M_2$ and $F$ is a constant matrix of order $K_1\times K_2$. 

To numerically approximate the state, adjoint, and control variables, it is necessary to evaluate integrals of the shifted Legendre scaling function basis defined in \eqref{span}. Accordingly, we provide exact expressions for the left- and right-sided single and double integrals of the function \( \Phi_{2^{J-1}M}(t) \), also denoted by \( \Phi_K(t) \).

\begin{result}
The exact left integral $({}_0 I_x)$ of the shifted Legendre scaling function basis $\Phi_K(t)$ is denoted and defined as
\begin{equation}
       {}_0 I_x\left(\Phi_{K} (t)\right)=[f_k^{(1)}(t)]=\begin{pmatrix}
            f_1^{(1)}(t)&
            f_2^{(1)}(t)&
            \ldots&
            f_k^{(1)}(t)&
            \ldots&
            f_K^{(1)}(t)         
             \end{pmatrix}^T,
             \end{equation}
            where the index $k=(n-1)M+m$ for $n=1,2,\ldots,2^{J-1}$, $m=1,2,\ldots,M$ such that
\begin{equation*}
 f_k^{(1)}(t)=\begin{cases}
             \sqrt{\frac{2m+1}{2^{J-1}}}\sum_{i=0}^m\left(\frac{c_{i,m}}{i+1}(2^{J-1}x-n+1)^{i+1}\right)&\text{if}\quad \frac{n-1}{2^{J-1}}<t<\frac{n}{2^{J-1}},\\ \sqrt{\frac{1}{2^{J-1}}} &\text{if}\quad t\geq \frac{n}{2^{J-1}}\quad \text{and} \quad m=0,\\
            0 &\text{otherwise},
             \end{cases}
    \end{equation*}
    for $k=1,2,\ldots,K.$
\end{result}
\begin{result}
The exact right integral $({}_xI_1)$ of the shifted Legendre scaling function basis $\Phi_K(t)$ is denoted and defined as
\begin{equation}
        {}_x I_1\left(\Phi_{K} (t)\right)=[f_k^{(2)}(t)]=\begin{pmatrix}
            f_1^{(2)}(t)&
            f_2^{(2)}(t)&
            \ldots&
            f_k^{(2)}(t)&
            \ldots&
            f_K^{(2)}(t)         
             \end{pmatrix}^T,
             \end{equation}
            where the index $k=(n-1)M+m$ for $n=1,2,\ldots,2^{J-1}$, $m=1,2,\ldots,M$ such that
\begin{equation*}
 f_k^{(2)}(t)=\begin{cases}
            \sqrt{\frac{1}{2^{J-1}}}&\text{if}\quad t\leq \frac{n-1}{2^{J-1}}\  \text{and} \ m=0,\\ 
             \sqrt{\frac{2m+1}{2^{J-1}}}\sum_{i=0}^m\left(\frac{c_{i,m}}{i+1}\left(1-(2^{J-1}x-n+1)^{i+1}\right)\right)&\text{if}\quad \frac{n-1}{2^{J-1}}<t<\frac{n}{2^{J-1}},\\ 0 &\text{otherwise},
             \end{cases}
    \end{equation*}
    for $k=1,2,\ldots,K.$
\end{result}
\begin{result}
The exact left double integral $({}_0 I_x^2)$ of the shifted Legendre scaling function basis $\Phi_K(t)$ is denoted and defined as
\begin{equation}
        {}_0 I_x^2\left(\Phi_{K} (t)\right)=[f_k^{(3)}(t)]=\begin{pmatrix}
            f_1^{(3)}(t)&
            f_2^{(3)}(t)&
            \ldots&
            f_k^{(3)}(t)&
            \ldots&
            f_K^{(3)}(t)         
             \end{pmatrix}^T,
             \end{equation}
            where the index $k=(n-1)M+m$ for $n=1,2,\ldots,2^{J-1}$, $m=1,2,\ldots,M$ such 
\begin{equation*}
 \!\!\!f_k^{(3)}(t)=\begin{cases}
            0&\text{if}\quad t\leq \frac{n-1}{2^{J-1}},\\ \sqrt{\frac{2m+1}{(2^{J-1})^3}}\sum_{i=0}^m\left(\frac{c_{i,m}}{(i+1)(i+2)}(2^{J-1}x-n+1)^{i+2}\right)&\text{if}\quad \frac{n-1}{2^{J-1}}<t<\frac{n}{2^{J-1}},\\ \sqrt{\frac{1}{2^{J-1}}}\left(\frac{2^{J-1}t-n+1}{2^{J-1}}-\frac{1}{2^J}\right) &\text{if}\quad t\geq \frac{n}{2^{J-1}} \ \text{and} \  m=0,\\
            \sqrt{\frac{2m+1}{2^{J-1}}}\left(-\frac{1}{2^{J-1}}\sum_{i=0}^m\frac{c_{i,m}}{i+2}\right) &\text{if}\quad t\geq \frac{n}{2^{J-1}} \  \text{and} \  m\neq0,
             \end{cases}
    \end{equation*}
    for $k=1,2,\ldots,K.$
\end{result}
In the following subsection, we utilize the results derived above to construct a numerical scheme for solving the FPOCP.

\subsection{Numerical Scheme}
Now, we construct the numerical scheme using the shifted Legendre scaling function basis defined in \eqref{span} for the solution of the optimality system \eqref{model}, \eqref{adjoint}, and \eqref{ineq1}. In this process, the mixed highest derivative of the state variables is approximated as follows:
\begin{equation}\label{triple}
    \frac{\partial^3 \rho_i(x,t)}{\partial x^2 \partial t}\approx\left(\Phi_{K_1}(x)\right)^T A_i \Phi_{K_2}(t), \quad i=1,2,\ldots N.
\end{equation}
Integrating this expression with respect to $t$ in the interval $(0,t)$ and using the initial condition $\rho_i(x,0)=\rho_i^0(x)$, we get
\begin{equation}\label{double}
    \begin{aligned}
        \frac{\partial^2 \rho_i(x,t)}{\partial x^2}&\approx \left(\Phi_{K_1}(x)\right)^T A_i \ {}_0I_t\Phi_{K_2}(t)+\frac{d^2 \rho_i^0(x)}{dx^2}.
    \end{aligned}
\end{equation}
Now, integrating with respect to $x$ in the interval $(0,x)$, we obtain
\begin{equation}\label{single}
    \begin{aligned}
        \frac{\partial \rho_i(x,t)}{\partial x}\approx{}_0I_x\left(\Phi_{K_1}(x)\right)^T A_i \ {}_0I_t\Phi_{K_2}(t)+\frac{d \rho_i^0(x)}{d x}-\frac{d\rho_i^0(x)}{d x}\bigg|_{x=0}+\frac{\partial \rho_i(x,t)}{\partial x}\bigg|_{x=0}.
    \end{aligned}
\end{equation}
Again, integrating with respect to $x$ in the interval $(0,x)$, we finally obtain
\footnotesize
\begin{equation}\label{state expression}
    \rho_i(x,t)\approx {}_0I_x^2\left(\Phi_{K_1}(x)\right)^T A_i \ {}_0 I_t\Phi_{K_2}(t)+\rho_i^0(x)-\rho_i^0(0)-x\frac{d\rho_i^0(x)}{d x}\bigg|_{x=0}+x\frac{\partial \rho_i(x,t)}{\partial x}\bigg|_{x=0}+\rho_i(0,t).
\end{equation}
\normalsize
Next, we find the expressions for $\frac{\partial \rho_i(x,t)}{\partial x}\bigg|_{x=0}$ and $\rho_i(0,t)$ using the boundary conditions and continuity conditions in \eqref{model}. Putting $x=1$ in \eqref{state expression}, multiplying it with $D_i$, and taking summation over all edges while doing required additions and subtractions, we have
 \begin{align*}
    \sum_{i=1}^N D_i\rho_i(1,t)\approx \sum_{i=1}^N \Bigg[D_i\Bigg({}_0 I_x^2\left(\Phi_{K_1}(1)\right)^T A_i \  {}_0 I_t  \Phi_{K_2}(t)+\rho_i^0(1)-\rho_i^0(0)-\frac{d\rho_i^0(x)}{d x}\bigg|_{x=0}\\+\frac{\partial \rho_i(x,t)}{\partial x}\bigg|_{x=0}+\rho_i(0,t)\Bigg)+\rho_i(0,t)u_i(0,t)-\rho_i(0,t)u_i(0,t)\Bigg].
\end{align*}
Now since, $\rho_i(1,t)=0$ and $\sum_{i=1}^N\left[D_i\frac{\partial \rho_i}{\partial x}(0,t)+u_i(0,t)\rho_i(0,t)\right]=0$, the above equation reduces to
\footnotesize
\begin{align*}
    \sum_{i=1}^N\left( D_i\left( \ {}_0 I_x^2 \left(\Phi_{K_1}(1)\right)^T A_i \  {}_0 I_t \Phi_{K_2}(t)- \rho_i^0(0) -\frac{d\rho_i^0(x)}{d x}\bigg|_{x=0}+ \rho_i(0,t)\right)-\rho_i(0,t)u_i(0,t)\right)\approx 0.
\end{align*}
\normalsize
Finally, using the continuity condition $\rho_i(0,t)=\rho_j(0,t) \quad i\neq j \quad i,j=1,2,\dots,N,$ we obtain
\vspace{-0.6cm}
\small

\begin{equation}\label{rho at 0}
   \rho_i(0,t)\approx\frac{\sum_{j=1}^N\left(D_j\ {}_0 I_x^2\left(\Phi_{K_1}(1)\right)^T A_j \  {}_0 I_t\Phi_{K_2}(t)-D_j\rho_j^0(0)-D_j\frac{d\rho_j^0(x)}{d x}\bigg|_{x=0}\right)}{\sum_{j=1}^{N}\left(u_j(0,t)-D_j\right)}. 
\end{equation}
\normalsize
Again, putting $x=1$ in \eqref{state expression}, we have
\small
\begin{align*}
    \rho_i(1,t)\approx {}_0 I_x^2\left(\Phi_{K_1}(1)\right)^T A_i \Phi_{K_2}(t)+\rho_i^0(1)-\rho_i^0(0)-\frac{d\rho_i^0(x)}{d x}\bigg|_{x=0}+\frac{\partial \rho_i(x,t)}{\partial x}\bigg|_{x=0}+\rho_i(0,t),
\end{align*}
\normalsize
which gives
\begin{equation*}
 \frac{\partial \rho_i(x,t)}{\partial x}\bigg|_{x=0}\approx  -{}_0 I_x^2\left(\Phi_{K_1}(1)\right)^T A_i \ {}_0 I_t\Phi_{K_2}(t)+\rho_i^0(0)+\frac{d\rho_i^0(x)}{d x}\bigg|_{x=0}-\rho_i(0,t). 
\end{equation*}
Thus, the expression for $\frac{\partial \rho_i(x,t)}{\partial x}\bigg|_{x=0}$ using \eqref{rho at 0} is obtained as
\footnotesize
\begin{equation}
\begin{aligned}\label{derivative of rho at 0}
 \frac{\partial \rho_i(x,t)}{\partial x}\bigg|_{x=0}\approx&-{}_0 I_x^2\left(\Phi_{K_1}(1)\right)^T A_i \ {}_0 I_t \Phi_{K_2}(t)+\rho_i^0(0)+\frac{d\rho_i^0(x)}{d x}\bigg|_{x=0}\\&-\frac{\sum_{j=1}^N\left(D_j\ {}_0 I_x^2\left(\Phi_{K_1}(1)\right)^T A_j \  {}_0 I_t\Phi_{K_2}(t)-D_j\rho_j^0(0)-D_j\frac{d\rho_j^0(x)}{d x}\bigg|_{x=0}\right)}{\sum_{j=1}^{N}\left(u_j(0,t)-D_j\right)}. 
\end{aligned}
\end{equation}
\normalsize
Substituting \eqref{rho at 0} and \eqref{derivative of rho at 0} in \eqref{single} and \eqref{state expression} respectively we end up on
\footnotesize
\begin{equation}
\begin{aligned}
\frac{\partial \rho_i(x,t)}{\partial x}\approx{}_0I_x\left(\Phi_{K_1}(x)\right)^T A_i \ {}_0I_t\Phi_{K_2}(t)+\frac{d \rho_i^0(x)}{d x} -{}_0 I_x^2\left(\Phi_{K_1}(1)\right)^T A_i \ {}_0 I_t \Phi_{K_2}(t)+\rho_i^0(0)\\-\frac{\sum_{j=1}^N\left(D_j\ {}_0 I_x^2\left(\Phi_{K_1}(1)\right)^T A_j \  {}_0 I_t\Phi_{K_2}(t)-D_j\rho_j^0(0)-D_j\frac{d\rho_j^0(x)}{d x}\bigg|_{x=0}\right)}{\sum_{j=1}^{N}\left(u_j(0,t)-D_j\right)}, 
 \end{aligned}
\end{equation}
\normalsize
and 
\footnotesize
\begin{equation}\label{final state}
    \begin{aligned}
   \rho_i(x,t)\approx&{}_0I_x^2\left(\Phi_{K_1}(x)\right)^T A_i \  {}_0 I_t\Phi_{K_2}(t)+\rho_i^0(x)-\rho_i^0(0)-x  {}_0I_x^2\left(\Phi_{K_1}(1)\right)^T A_i \ {}_0 I_t\Phi_{K_2}(t)+x\rho_i^0(0)\\&
   +(1-x)  \frac{\sum_{j=1}^N\left(D_j\ {}_0 I_x^2\left(\Phi_{K_1}(1)\right)^T A_j \  {}_0 I_t\Phi_{K_2}(t)-D_j\rho_j^0(0)-D_j\frac{d\rho_j^0(x)}{d x}\bigg|_{x=0}\right)}{\sum_{j=1}^{N}\left(u_j(0,t)-D_j\right)}.  
    \end{aligned}
\end{equation}
\normalsize
Finally taking the derivative of \eqref{final state} with respect to $t$, we conclude
\begin{equation}
    \begin{aligned}
        \frac{\partial\rho_i(x,t)}{\partial t}&\approx{}_0I_x^2\left(\Phi_{K_1}(x)\right)^T A_i \Phi_{K_2}(t)-x  {}_0I_x^2\left(\Phi_{K_1}(1)\right)^T A_i\Phi_{K_2}(t)+(1-x)\\
        &\times \frac{\splitfrac{\Bigg[\left(\sum_{j=1}^{N}\left(u_j(0,t)-D_j\right)\right)\left(\sum_{j=1}^{N}D_j\ {}_0 I_x^2\left(\Phi_{K_1}(1)\right)^T A_j\Phi_{K_2}(t)\right)\mathstrut}{\splitfrac{-\bigg(\sum_{j=1}^N\bigg(D_j\ {}_0 I_x^2\left(\Phi_{K_1}(1)\right)^T A_j \  {}_0 I_t\Phi_{K_2}(t)-D_j\rho_j^0(0)\mathstrut}{-D_j\frac{d\rho_j^0(x)}{d x}\bigg|_{x=0}\bigg)\bigg) \left(\sum_{j=1}^N\frac{d u_j(0,t)}{d t}\right)\Bigg]\quad\mathstrut}}}{\left(\sum_{j=1}^{N}\left(u_j(0,t)-D_j\right)\right)^2}.
         \end{aligned}
 \end{equation}
Similarly, the control and adjoint state variables can also be represented using the shifted Legendre scaling function basis. To proceed, we select \( K_1 \) and \( K_2 \) collocation points for the spatial variable \( x \) and the temporal variable \( t \), respectively. These points are chosen uniformly as
\(
x_{k_1} = \frac{2k_1 - 1}{2^{J_1} M_1}, \quad \text{for } k_1 = 1, 2, \ldots, 2^{J_1 - 1} M_1 = K_1,
\quad \text{and} \quad
t_{k_2} = \frac{2k_2 - 1}{2^{J_2} M_2}, \quad \text{for } k_2 = 1, 2, \ldots, 2^{J_2 - 1} M_2 = K_2.
\)
The expressions obtained for the state, adjoint state, and control variables, along with their derivatives, are then substituted into the corresponding equations \eqref{model}, \eqref{adjoint}, and \eqref{ineq1} at the collocation points \( x_{k_1} \) and \( t_{k_2} \). This results in a system of \( 3N \times K_1 \times K_2 \) algebraic equations, which can be solved to obtain approximate optimal values of the state and control variables.
In the next section, we verify the proposed method for its efficiency to approximate the optimal solutions of the problem for two different examples.
\section{Numerical Results}\label{numerical res}
We consider a star graph \(\mathcal{G} = (V, E)\) consisting of three edges, where each edge is parameterized by the interval \((0, l_i)\). To validate the accuracy of our numerical method, we examine the non-homogeneous Fokker-Planck equation in the following examples. That is, we replace the Fokker-Planck equation in \eqref{model} with the modified form:
\vspace{-0.3cm}
\begin{equation}
    \frac{\partial \rho_i}{\partial t}-D_i\frac{\partial^2 \rho_i}{\partial x^2}-\frac{\partial }{\partial x}\left(u_i\rho_i\right)=f_i(x,t)\quad i=1,2,3.
\end{equation}
We present two illustrative examples: the first admits polynomial solutions, while the second involves exponential and trigonometric functions. In both cases, we compute the \(l_{\infty}\)-norm errors in approximating the state and control variables, denoted by \(\left(e_{\rho_i}\right)_{K_1}^{K_2}\) and \(\left(e_{u_i}\right)_{K_1}^{K_2}\), respectively, given by

$$
\begin{aligned}
 \left(e_{\rho_i}\right)_{K_1}^{K_2}&=\left\|\rho_i-\left(\rho_i\right)_{K_1}^{K_2}\right\|_{\infty}, \quad
\left( e_{u_i}\right)_{K_1}^{K_2}&=\left\|u_i-\left(u_i\right)_{K_1}^{K_2}\right\|_{\infty},
\end{aligned}
$$ 
where \(\left(\rho_i\right)_{K_1}^{K_2}\) and \(\left(u_i\right)_{K_1}^{K_2}\) represent the approximated optimal values for the state and control variables, respectively, obtained using the collocation method with \(K_1 \times K_2\) collocation points. We also compute the corresponding optimal cost functional values. Let \(\sigma_{K_1}^{K_2}\) denote the approximated optimal cost obtained using \(K_1 \times K_2\) collocation points. Then \(\sigma_{K_1}^{K_2}\) is computed as
\begin{equation*}
\begin{aligned}
    \!\!\!J\approx&\frac{1}{2}\sum_{i=1}^3\Bigg[\int_{0}^{T}\int_{0}^{l_i}\left(\left|(\rho_i)_{K_1}^{K_2}-\rho_i^d\right|^2+\alpha_i\left|(u_i)_{K_1}^{K_2}\right|^2\right) dx dt+\int_{0}^{l_i}\left|(\rho_i(T))_{K_1}^{K_2}-\rho_i^T\right|^2 dx\Bigg]\\
    =&\frac{1}{2}\sum_{i=1}^3\Bigg[\sum_{k_2=1}^{K_2}\int_{\frac{k_2-1}{K_2}}^{\frac{k_2}{K_2}}\sum_{k_1=1}^{K_1}\int_{\frac{k_1-1}{K_1}}^{\frac{k_1}{K_1}}\left(\left|(\rho_i)_{K_1}^{K_2}-\rho_i^d\right|^2+\alpha_i\left|(u_i)_{K_1}^{K_2}\right|^2\right) dx dt\\
    &\hspace{3.5cm}+\sum_{k_1=1}^{K_1}\int_{\frac{k_1-1}{K_1}}^{\frac{k_1}{K1}}\left|(\rho_i(T))_{K_1}^{K_2}-\rho_i^T\right|^2 dx\Bigg],
\end{aligned}    
\end{equation*}
which on using midpoint formula ends up on
\begin{equation}
 \begin{aligned}
\sigma_{K_1}^{K_2}=\frac{1}{2}\sum_{i=1}^3\Bigg[\sum_{k_2=1}^{K_2}\sum_{k_1=1}^{K_1}\left(\left|(\rho_i)_{K_1}^{K_2}\left(x_{k_1},t_{k_2}\right)-\rho_i^d\left(x_{k_1},t_{k_2}\right)\right|^2+\alpha_i\left|(u_i)_{K_1}^{K_2}\left(x_{k_1},t_{k_2}\right)\right|^2\right) \\+\sum_{k_1=1}^{K_1}\left|\left(\rho_i\right)_{K_1}^{K_2}\left(x_{k_1},T\right)-\rho_i^T\left(x_{k_1}\right)\right|^2 dx\Bigg].    
\end{aligned}
\end{equation}

\begin{example}\label{example1}
Consider $l_i=1$, $T=1$, $\alpha_i=1$, $D_i=1$ $(i=1,2,3)$, $u^{min}=-1$ and $u^{max}=1$. The desired state is given as 
\begin{align*}
 \rho_d^1=\rho_d^2=x^2(1-x)t,\quad
    \rho_d^3=x^2(1-x)^2t.
\end{align*}
The initial conditions ($\rho_i^0(x)$), $f_i(x,t)$ and $\rho^i_T(x)$ can be found accordingly.
\end{example}
The optimal cost for this FPOCP is $0$. Tables \ref{table 1} and \ref{table 2} report the approximation errors for the state and control variables, respectively, on each edge of the star graph. It is evident that as the dilation parameters \(J_1\) and \(J_2\) increase with $M_1=M_2=4$, the errors tend to zero. This behavior is expected, as the test case involves polynomial solutions. Consequently, the proposed method yields highly accurate results even for relatively small values of the dilation parameters. Furthermore, Table \ref{table 3} displays the values of the cost functional for increasing values of \(J_1\) and \(J_2\), with \(M_1 = M_2 = 4\) held fixed. The results demonstrate the convergence of the method toward the exact optimal cost. Figure  \ref{figure 2} illustrates the coincidence of  the exact optimal state  with the approximate optimal state solutions for \(J_1 = J_2 = 2\) and \(M_1 = M_2 = 4\). Additionally, Figure \ref{figure 3} shows the approximate optimal control, which converges to $0$, as expected, under the same parameter settings.

\begin{minipage}{0.48\textwidth}
\centering
    \begin{tabular}{|c|c|c|c|}
    \hline
        $J_1$ & $J_2$ & $e_{\rho_1}$=$e_{\rho_2}$  &$e_{\rho_3}$ \\       	
\hline
2.0 &	2.0 &	2.2673e-09 &		9.1677e-10 \\	
\hline
2.0 &	3.0 &	1.5190e-10 &		6.1194e-11 \\	
\hline
3.0 &	2.0 &	6.0020e-10 &		2.9207e-10 \\	
\hline
3.0 &	3.0 &	4.5231e-11 &		2.4457e-11 \\	
\hline
    \end{tabular}
    \captionof{table}{$l_\infty$ errors $e_{\rho_i}$ for the state \\variable $\rho_i$ for Example \ref{example1}.}
    \label{table 1}  
\end{minipage}
 \hspace{-0.4cm} 
\begin{minipage}{0.48\textwidth}
\centering
    \begin{tabular}{|c|c|c|c|}
    \hline
        $J_1$ & $J_2$ & $e_{u_1}$=$e_{u_2}$  &$e_{u_3}$ \\       	

\hline
2.0 &	2.0 &	1.3328e-06 &		1.3209e-06 \\	
\hline
2.0 &	3.0 &	9.8293e-08 &		9.8142e-08 \\	
\hline
3.0 &	2.0 &	1.5256e-07 &		1.5099e-07 \\	
\hline
3.0 &	3.0 &	1.1494e-08 &		1.1389e-08\\ 	
\hline
    \end{tabular}
    \captionof{table}{$l_\infty$ errors $e_{\rho_i}$ for the control \\variable $u_i$ for Example \ref{example1}.}
    \label{table 2}
\end{minipage}


\begin{figure}
\vspace{-2.5cm} 
\centering
\begin{minipage}{.5\textwidth}
  \centering
  \includegraphics[width=0.99\linewidth]{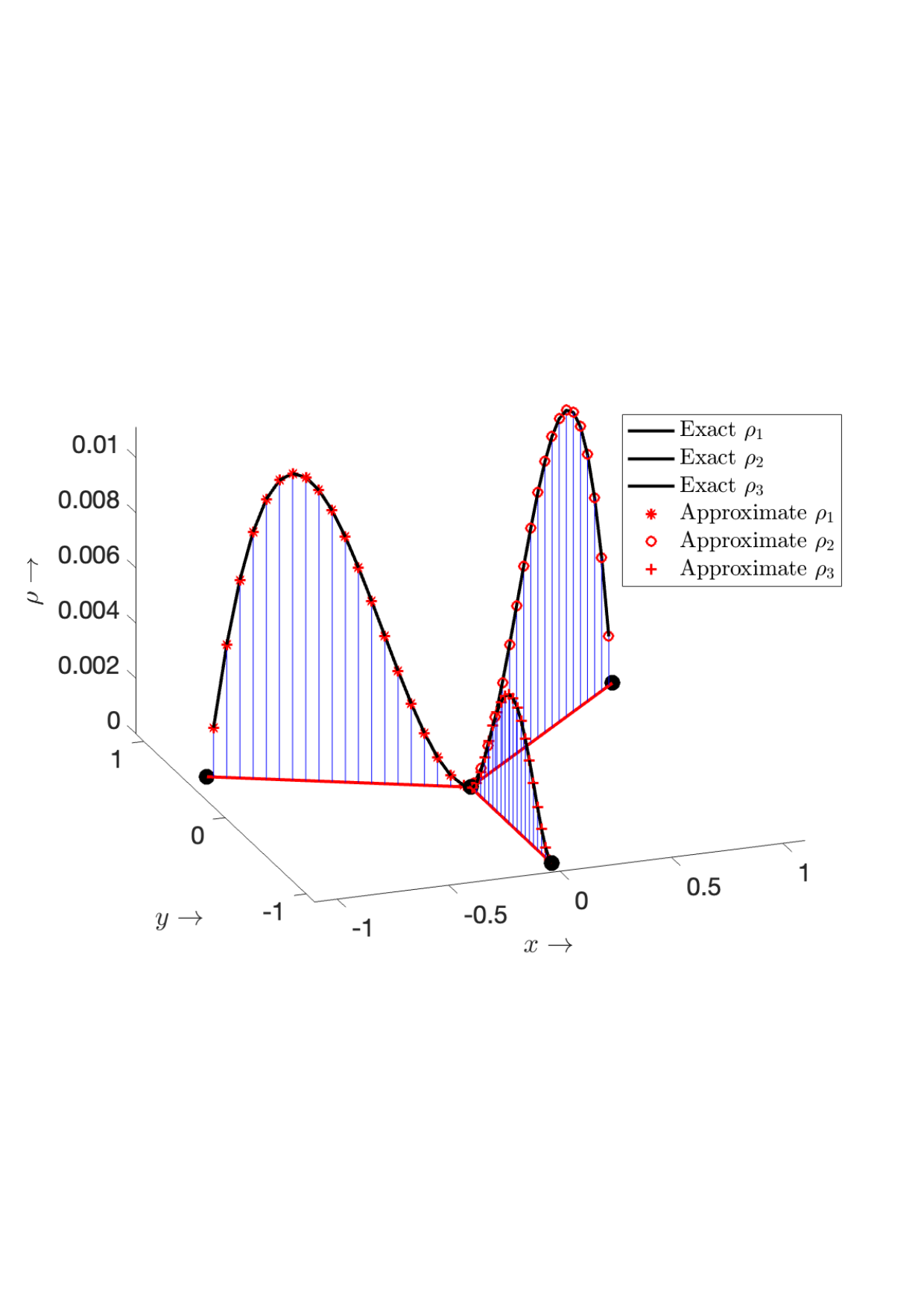}
\caption*{}
  \label{fig3}
\end{minipage}%
\begin{minipage}{.5\textwidth}
  \centering
  \includegraphics[width=0.99\linewidth]{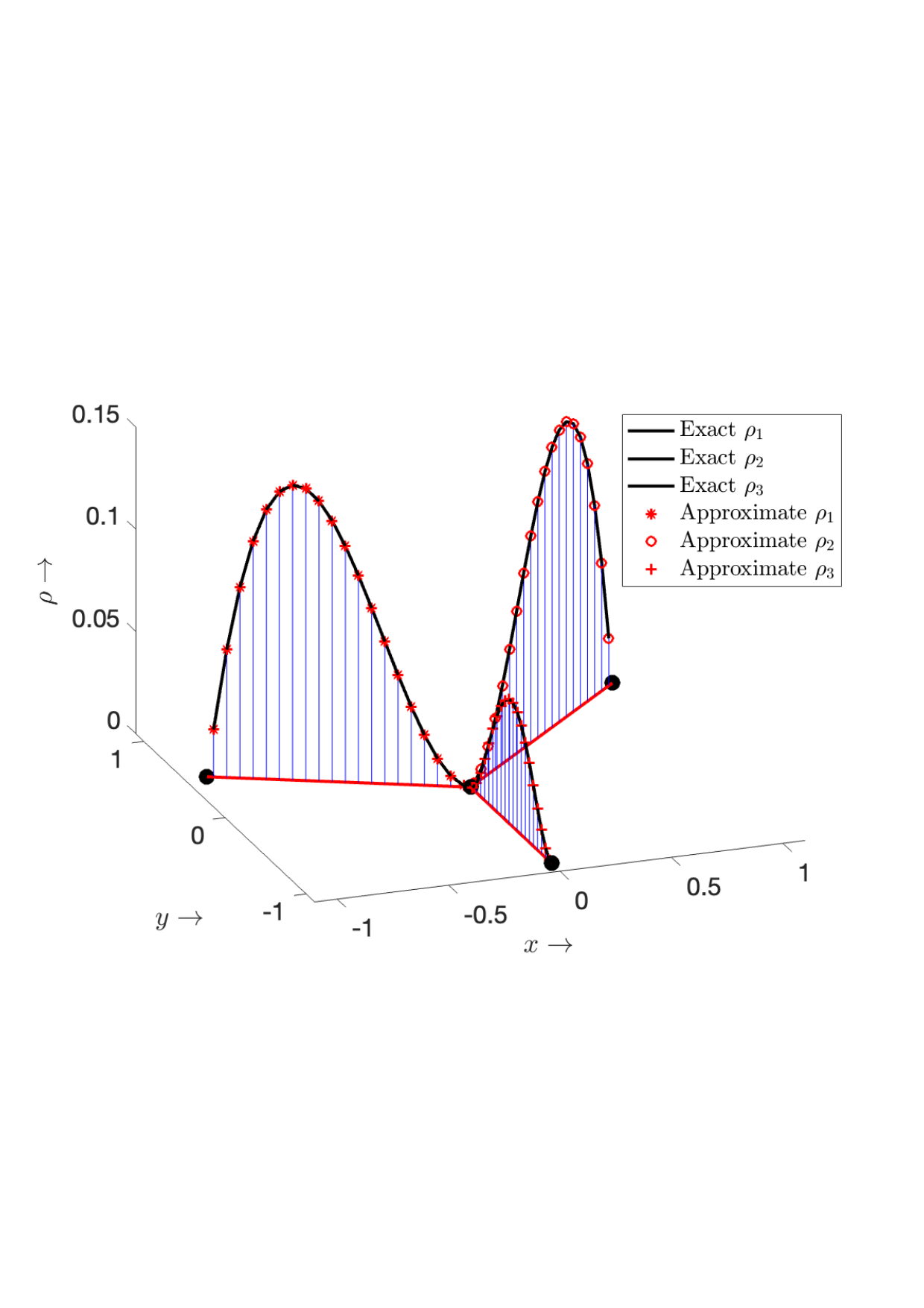}
 \caption*{}
  \label{fig4}
\end{minipage}

\setlength{\abovecaptionskip}{-13pt} 
    \setlength{\belowcaptionskip}{-13pt} 
    \vspace{-3cm}
\caption{Exact and approximate optimal state on the metric graph at time $t_1=0.075$ (on the left) and $t_2=0.975$ (on the right) for $J_1=J_2=2$ and $M_1=M_2=4$ for Example \ref{example1}. }
\label{figure 2}
\end{figure}
 
\begin{figure}
\centering
\begin{minipage}{.5\textwidth}
  \centering
  \includegraphics[width=0.99\linewidth]{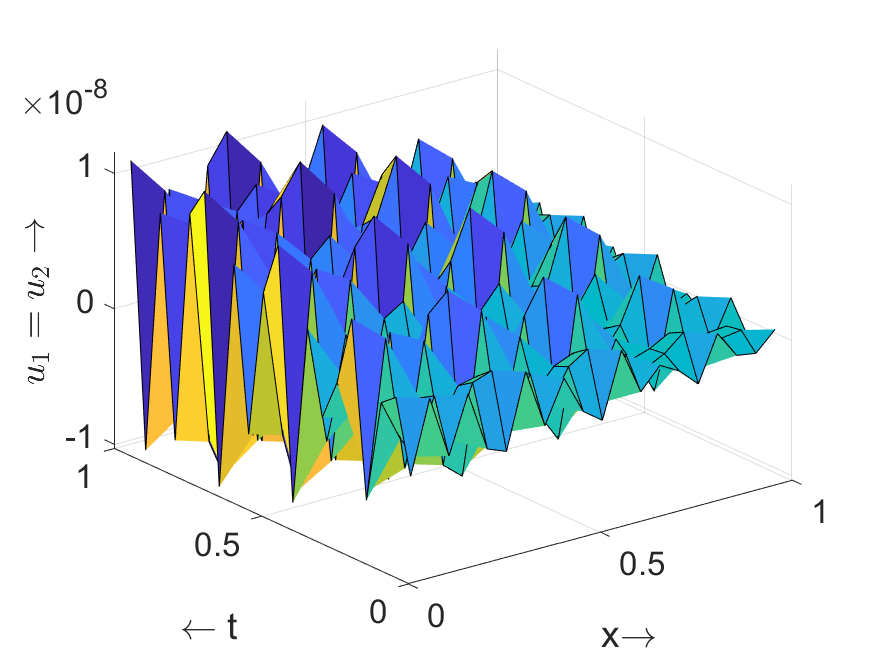}
\caption*{}
  \label{fig5}
\end{minipage}%
\begin{minipage}{.5\textwidth}
  \centering
  \includegraphics[width=0.99\linewidth]{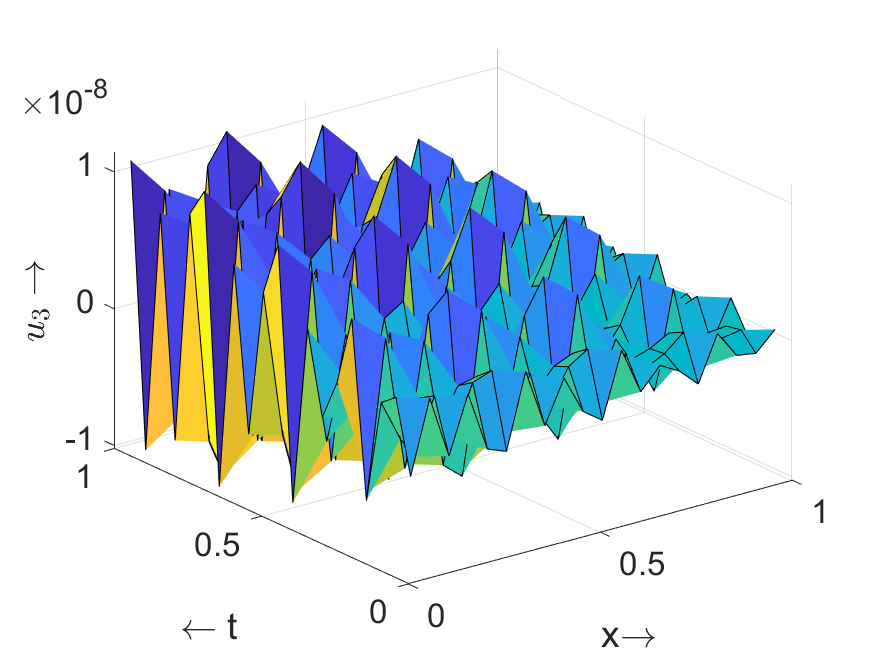}
 \caption*{}
  \label{fig6}
\end{minipage}
\setlength{\abovecaptionskip}{-13pt} 
    \setlength{\belowcaptionskip}{-13pt} 
\caption{Approximate optimal control $(u_1=u_2)$ (on the left) and $u_3$ (on the right) for $J_1=J_2=3$ and $M_1=M_2=4$ for Example \ref{example1}. }
\label{figure 3}
\end{figure}
Next we consider an example involving exponential and trigonometric functions to better verify the accuracy of the scheme.
\begin{example}\label{example2}
    
Consider $l_i=1$, $T=1$, $\alpha_i=1$, $D_i=\frac{1}{4\pi^2}$ $(i=1,2,3)$, $u^{min}=-1$ and $u^{max}=1$. The desired state is given as 
\begin{align*}
 \rho_d^1=\rho_d^2=e^{-t}\sin^2(\pi x), \quad
    \rho_d^3=2e^{-t}\sin^2(\pi x).
\end{align*}
The initial conditions ($\rho_i^0(x)$), $f_i(x,t)$ and $\rho^i_T(x)$ can be found accordingly.
\end{example}
\begin{minipage}{0.3\textwidth}
\begin{tabular}{|c|c|c|}
    \hline
        $J_1$ & $J_2$ & $\sigma$ \\       	
\hline
2.0 &	2.0 &	3.9418e-14 \\	
\hline
2.0 &	3.0 &	7.4991e-16 \\	
\hline
3.0 &	2.0 &	1.7851e-15 \\	
\hline
3.0 &	3.0 &	3.5298e-17 \\	
\hline
    \end{tabular}
    \captionof{table}{Approximate value of the optimal cost $(\sigma)$ for Example \ref{example1}.}
    \label{table 3}
    \centering
\begin{tabular}{|c|c|c|}
    \hline
        $J_1$ & $J_2$ & $\sigma$ \\       	
        \hline
2.0 &	2.0 &	2.4823e-03 \\	
\hline
2.0 &    3.0 &    2.4558e-03 \\
\hline
 2.0 &	4.0 &	2.4554e-03 \\
\hline
3.0 &	2.0 &	9.2552e-06 \\
\hline
3.0 &    3.0 &    8.2941e-06 \\
\hline
3.0 &	4.0 &	8.2961e-06 \\
\hline
4.0 &	2.0 &	1.7771e-07 \\	
\hline
4.0 &    3.0 &    7.3355e-08 \\
\hline
4.0 &	4.0 &	6.0509e-08\\
\hline
\end{tabular}
 \captionof{table}{Approximate value of the optimal cost $(\sigma)$ for Example \ref{example2}.}
    \label{table 6}
     \end{minipage}
\hspace{0.1cm}
\begin{minipage}{0.6\textwidth}
 \centering
    \begin{tabular}{|c|c|c|c|}
    \hline
        $J_1$ & $J_2$ & $e_{\rho_1}=e_{\rho_2}$  &$e_{\rho_3}$ \\       	
\hline
2.0 &	2.0 &	1.5119e-02 &		7.0703e-02 \\	
\hline
2.0 &	3.0 &	1.5387e-02 &		6.9981e-02 \\	
\hline
2.0 &	4.0 &	1.5412e-02 &		6.9934e-02 \\	
\hline
3.0 &	2.0 &	8.4354e-04 &		3.8884e-03 \\	
\hline
3.0 &	3.0 &	8.5069e-04 &		3.7389e-03 \\	
\hline
3.0 &	4.0 &	8.4849e-04 &		3.7391e-03 \\	
\hline
4.0 &	2.0 &	5.2537e-05 &		3.7957e-04 \\	
\hline
4.0 &	3.0 &	5.3268e-05 &		3.1820e-04 \\	
\hline
4.0 &	4.0 &	5.6517e-05 &		3.1003e-04 \\	
\hline
    \end{tabular}
    \captionof{table}{$l_\infty$ errors $e_{\rho_i}$ for the state variable $\rho_i$ for Example \ref{example2}.}
    \label{table 4}
    \centering
    \begin{tabular}{|c|c|c|c|}
    \hline
        $J_1$ & $J_2$ & $e_{u_1}$=$e_{u_2}$  &$e_{u_3}$ \\   \hline
2.0 &	2.0 &	2.2081e-02 &		1.0780e-01 \\	
\hline
2.0 &	3.0 &	2.1668e-02 &		1.0824e-01 \\	
\hline
2.0 &	4.0 &	2.1631e-02 &		1.0825e-01 \\	
\hline
3.0 &	2.0 &	9.9663e-04 &		5.8444e-03 \\	
\hline
3.0 &	3.0 &	1.0037e-03 &		5.5009e-03 \\	
\hline
3.0 &	4.0 &	1.0038e-03 &		5.5010e-03 \\	
\hline
4.0 &	2.0 &	3.1768e-04 &		1.7780e-03 \\	
\hline
4.0 &	3.0 &	2.0587e-04 &		1.0530e-03 \\	
\hline
4.0 &	4.0 &	1.6385e-04 &	7.4550e-04 \\	
\hline

    \end{tabular}
    \captionof{table}{$l_\infty$ errors $e_{u_i}$ for the control variable $u_i$ for Example \ref{example2} .}
    \label{table 5}
\end{minipage}

\begin{figure}
\vspace{-2.5cm}
\centering
\begin{minipage}{.5\textwidth}
  \centering
  \includegraphics[width=0.99\linewidth]{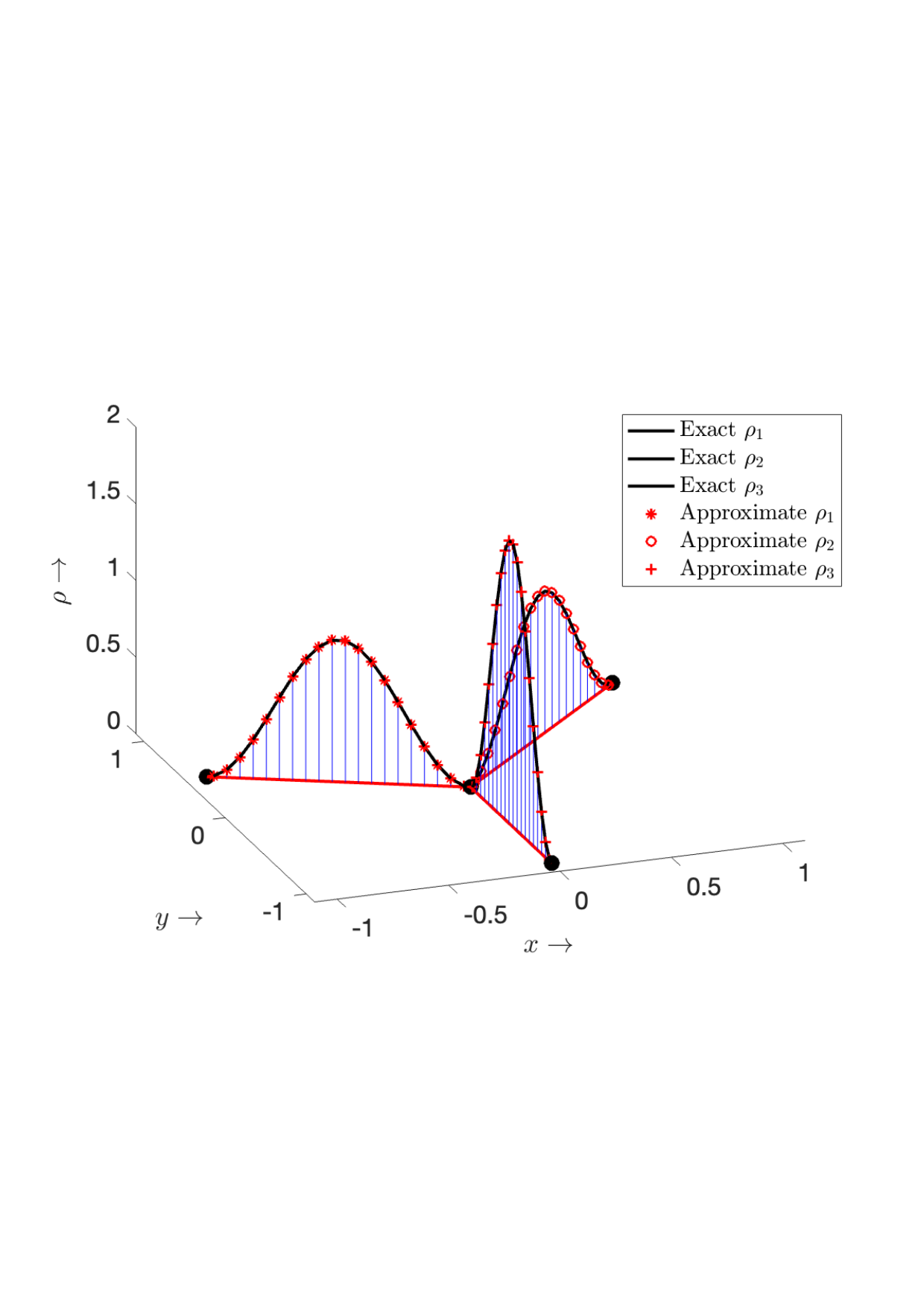}
\caption*{}
  \label{fig7}
\end{minipage}%
\begin{minipage}{.5\textwidth}
  \centering
  \includegraphics[width=0.99\linewidth]{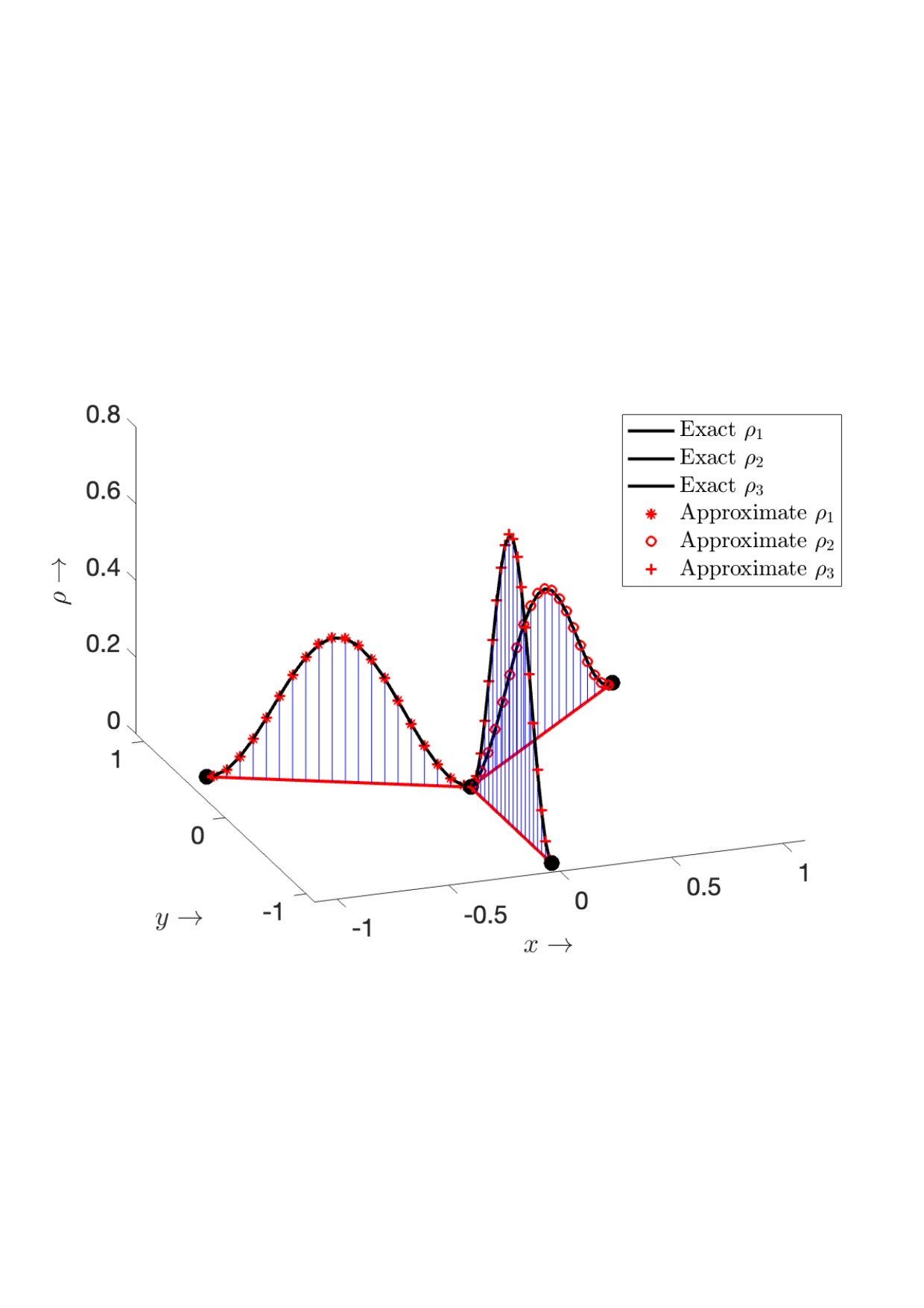}
 \caption*{}
  \label{fig8}
\end{minipage}
\setlength{\abovecaptionskip}{-13pt} 
    \setlength{\belowcaptionskip}{-13pt} 
    \vspace{-3cm}
\caption{Exact and approximate optimal state on the metric graph at time $t_1=0.075$ (on the left) and $t_2=0.975$ (on the right) for $J_1=J_2=3$ and $M_1=M_2=4$ for Example \ref{example2}. }
\label{figure 4}
\end{figure}
The exact optimal cost for this FPOCP is $0$. Tables \ref{table 4} and \ref{table 5} report the approximation errors for the state and control variables, respectively, on each edge of the star graph. As the dilation parameters \(J_1\) and \(J_2\) increase with \(M_1 = M_2 = 4\), the errors clearly decrease and tend toward zero. These results demonstrate that the proposed method achieves satisfactory accuracy, even when approximating complex functions such as trigonometric and exponential functions. Furthermore, Table \ref{table 6} displays the values of the cost functional for increasing values of \(J_1\) and \(J_2\), with \(M_1 = M_2 = 4\) held fixed. The results demonstrate the convergence of the method toward the exact optimal cost. Figure \ref{figure 4}  illustrates the coincidence of  the exact optimal state  with the approximate optimal state solutions for \(J_1 = J_2 = 4\) and \(M_1 = M_2 = 4\). Additionally, Figure \ref{figure 6} shows the approximate optimal control, which converges to $0$, as expected, under the same parameter settings.

\begin{figure}
\centering
\begin{minipage}{.5\textwidth}
  \centering
  \includegraphics[width=0.99\linewidth]{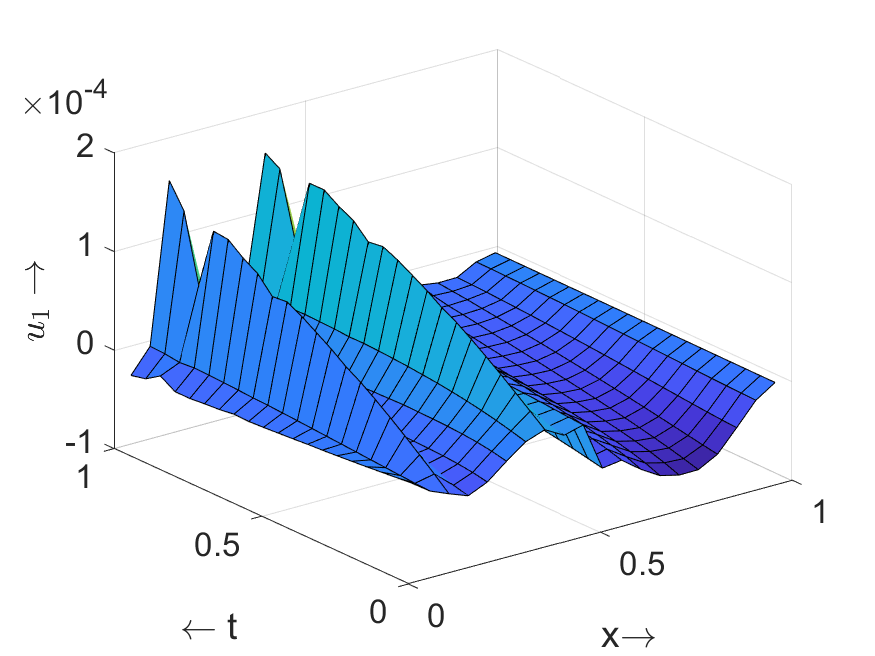}
\caption*{}
  \label{fig11}
\end{minipage}%
\begin{minipage}{.5\textwidth}
  \centering
  \includegraphics[width=0.99\linewidth]{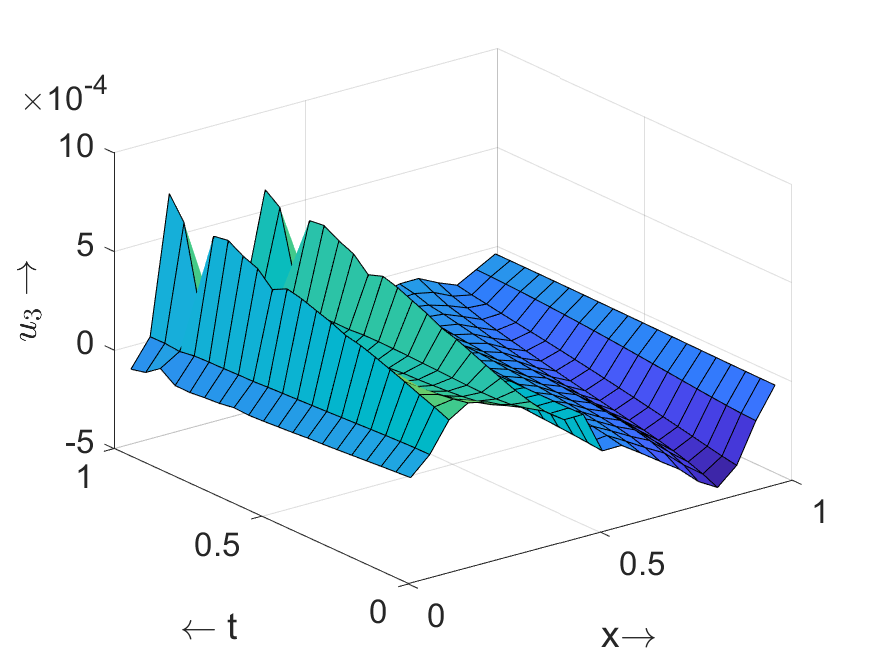}
 \caption*{}
  \label{fig12}
\end{minipage}
\setlength{\abovecaptionskip}{-13pt} 
    \setlength{\belowcaptionskip}{-13pt} 
\caption{Approximate optimal control $(u_1=u_2)$ (on the left) and $u_3$ (on the right) for $J_1=J_2=4$ and $M_1=M_2=4$ for Example \ref{example2}. }
\label{figure 6}
\end{figure}
\section{Conclusion}\label{conc}
This paper addresses an optimal control problem governed by the Fokker-Planck equation (FPOCP) on metric star graphs. The existence and uniqueness of solutions to the nonlinear, nonhomogeneous Fokker-Planck equation were established for any arbitrary admissible control function $u$. Furthermore, the existence of an optimal solution to the FPOCP was proved. The corresponding adjoint equation and the first-order necessary optimality condition were derived for the FPOCP.  Further, to numerically solve the problem, a scheme based on shifted Legendre wavelets was developed, utilizing the exact integrals of the shifted Legendre scaling function basis. A collocation method was employed to solve two illustrative examples, where the $l_\infty$-norm errors were computed for both the optimal state and optimal control approximations. Additionally, the approximate values of the optimal cost functional were presented, demonstrating an approximation error of order lesser than $10^{-8}$. These results confirm the high accuracy and effectiveness of the proposed method for solving the FPOCP.

\section*{Acknowledgements}
 CK acknowledges support from the Natural Sciences and Engineering Research Council of Canada (NSERC), Discovery Grant RGPIN-2025-05864 and the Discovery Launch Supplement DGECR-2025-00184.  
\bibliographystyle{abbrv}
\bibliography{references}
	
\end{document}